\definecolor{dblue}{rgb}{0,0,0.70}
\newtheorem{theorem}{Theorem}[section]
\newtheorem*{theorem*}{Theorem}
\newaliascnt{lemma}{theorem}
\newtheorem{lemma}[lemma]{Lemma}
\newtheorem*{lemma*}{Lemma}
\newaliascnt{claim}{theorem}
\newtheorem{claim}[claim]{Claim}
\newaliascnt{proposition}{theorem}
\newtheorem{proposition}[proposition]{Proposition}
\newaliascnt{corollary}{theorem}
\newtheorem{corollary}[corollary]{Corollary}
\newaliascnt{fact}{theorem}
\newtheorem{fact}[fact]{Fact}
\theoremstyle{remark}
\newaliascnt{remark}{theorem}
\newtheorem{remark}[remark]{Remark}
\newaliascnt{question}{theorem}
\newtheorem{question}[question]{Question}
\newtheorem*{question*}{Question}
\newaliascnt{definition}{theorem}
\newtheorem{definition}[definition]{Definition}
\newaliascnt{example}{theorem}
\renewcommand{\restriction}{\mathbin\upharpoonright}
\newcommand{\axiomft}[1]{\mathsf{#1}}
\newcommand{\ZFC}{\axiomft{ZFC}}
\newcommand{\Ord}{\mathrm{Ord}}
\newcommand{\GCH}{\axiomft{GCH}}
\newcommand{\PFA}{\axiomft{PFA}}
\newcommand{\MRP}{\axiomft{MRP}}
\newcommand{\MA}{\axiomft{MA}}
\newcommand{\MM}{\axiomft{MM}}
\newcommand{\Str}{\axiomft{Str}}
\DeclareMathOperator{\FA}{FA}
\DeclareMathOperator{\cf}{cf}
\DeclareMathOperator{\ot}{ot}
\DeclareMathOperator{\dom}{dom}
\DeclareMathOperator{\range}{range}
\DeclareMathOperator{\Col}{Col}
\DeclareMathOperator{\Add}{Add}
\newcommand{\forces}{\mathrel{\Vdash}}
\newcommand{\compatible}{\mathrel{||}}
\newcommand{\PP}{\mathbb{P}}
\newcommand{\power}{\mathcal{P}}
\newcommand{\QQ}{\mathbb{Q}}
\newcommand{\cD}{\mathcal D}
\newcommand{\cS}{\mathcal S}
\newcommand{\cT}{\mathcal T}
\newcommand{\cK}{\mathcal K}
\newcommand{\tup}[1]{\langle#1\rangle}
\author{David Asper\'o}
\author{Sean Cox}
\author{Asaf Karagila}
\author{Christoph Weiss}
\thanks{The first author acknowledges support of EPSRC Grant [EP/N032160/1]. The second author was supported by the National Science Foundation (DMS-2154141). The third author was supported by the Royal Society Newton International Fellowship, grant no.~NF170989 and UKRI Future Leaders Fellowship [MR/T021705/2]. No data are associated with this article.}
\thanks{The fact that $\omega_1$-$\Str\PFA$ is consistent relative to $\ZFC$ was proved independently by Asper\'{o}-Karagila and Cox-Weiss.  Extensions to larger $\kappa$ are due to Asper\'{o}-Karagila.}
\address[Asper\'o]{School of Mathematics,
University of East Anglia,
Norwich, NR4~7TJ, UK}
\email{d.aspero@uea.ac.uk}
\address[Cox]{Department of Mathematics and Applied Mathematics,
  Virginia Commonwealth University,
  1015 Floyd Avenue, Richmond, Virginia 23284, USA}
\email{scox9@vcu.edu}
\address[Karagila]{School of Mathematics,
  University of Leeds,
  Leeds, LS2~9JT, UK}
\email{karagila@math.huji.ac.il}
\address[Weiss]{WSoptics GmbH,
S\"udliche Keltenstra{\ss}e 3,
86972 Altenstadt,
Germany
}
\email[Christoph Weiss]{weiss@wsoptics.de}
\date{18 March, 2024}
\subjclass[2020]{Primary 03E57; Secondary 03E55, 03E35}
\keywords{higher forcing axioms, large cardinals, forcing axioms, side conditions, strong properness}
\title{The $\kappa$-Strongly Proper Forcing Axiom}
\begin{document}
\begin{abstract}
We study methods to obtain the consistency of forcing axioms, and particularly higher forcing axioms. We first force over a model with a supercompact cardinal $\theta>\kappa$ to get the consistency of the forcing axiom for $\kappa$-strongly proper forcing notions which are also $\kappa$-lattice, and then eliminate the need for large cardinals. The proof goes through a natural reflection property for $\kappa$-strongly proper forcings. We also produce a model of this forcing axiom with $2^\kappa$ arbitrarily large, and prove the inconsistency of certain natural strengthenings of the axiom.
\end{abstract}
\maketitle
\section{Introduction}
Forcing axioms are set-theoretic axioms which state that the universe is ``rich with filters'' for forcing notions in a particular class. More technically, forcing axioms are statements saying that given a forcing notion in a particular class, and a ``relatively small'' collection of dense open subsets, there is a filter which meets all dense open sets in the collection. Martin's Axiom states that if $\PP$ is c.c.c.\ (i.e., if $\PP$ satisfies the countable chain condition) and $\{D_\alpha\mid\alpha<\kappa\}$ is a family of dense open subsets of $\PP$, where $\kappa<2^{\aleph_0}$, then there is a filter $G$ such that $G\cap D_\alpha\neq\varnothing$ for all $\alpha<\kappa$. $\ZFC$ trivially proves that for $\kappa=\aleph_0$ such filters exist for any forcing $\PP$, regardless of its combinatorial properties.

When we assume forcing axioms hold in the universe $V$, we can prove that there are objects in $V$ which exhibit ``somewhat generic properties''. For instance, if we assume Martin's Axiom, and $\{f_\alpha\mid\alpha<\kappa\}\subseteq\omega^\omega$ for $\kappa<2^{\aleph_0}$, then we may consider $\PP=\omega^{<\omega}$ and $D^\alpha_n=\{s\in\omega^{<\omega}\mid f_\alpha(m)<s(m)\mbox{ for some }m\in |s|,\,m>n\}$, for $\alpha<\kappa$ and $n<\omega$, as our dense open sets. If $G\cap D^\alpha_n\neq\varnothing$ for all $\alpha$ and $n$, then $\bigcup G=g\neq f_\alpha$ for all $\alpha$, and in fact for each $\alpha$, $g(m)>f_\alpha(m)$ holds infinitely often. Therefore, Martin's Axiom implies that $\mathfrak{d}$, the dominating number, equals $2^{\aleph_0}$.

In the classical case, forcing axioms are phrased around $\aleph_1$ as the main cardinal of interest. In this context, the forcing notions themselves somehow revolve around this (e.g., properness is defined with models of size less than $\aleph_1$, i.e., countable). Recent work on extensions of classical forcing axioms such as the Proper Forcing Axiom, relative to collections of more than $\aleph_1$ dense sets, deals with subclasses of proper forcing notions, and tries to push the size of $2^{\aleph_0}$ to $\aleph_3$ or higher. This is  difficult, since amongst these ``somewhat generic properties'' we can find, for example, closed and unbounded subsets of $\omega_2$ contradicting club guessing on $\omega_2\cap \cf(\omega)$,\footnote{The existence, in $\ZFC$, of a club-sequence $\langle C_\alpha\mid\alpha<\omega_2, \cf(\alpha)=\omega\rangle$ guessing clubs of $\omega_2$---i.e., such that every club $C\subseteq\omega_2$ includes some $C_\alpha$---is a well-known result of Shelah.} or functions $\omega_2\to\{0, 1\}$ uniformising colourings for which there is no uniformisation (see \cite{Shelah:PIF}).

Moving to higher cardinals is harder also because we lose our iteration theorems. Iterating c.c.c.\ forcing notions with finite support is still c.c.c., and iterating proper forcing notions with countable support is still proper. But moving to higher cardinals, even if we require the forcings to be very closed, might
result in unwanted cardinal collapsing (see \cite{Shelah:PIF}).

James Cummings, Mirna D\v{z}amonja, and Itay Neeman proved in \cite{CDN:arXiv} the consistency of a forcing axiom of this flavour by replacing c.c.c.\ by a more restrictive form of the $\kappa^+$-c.c.\ known as the strong $\kappa^+$-c.c. In this note we deal with $\kappa$-strong properness, a more general notion than the strong $\kappa^+$-c.c.\ from \cite{CDN:arXiv}.
We show that Neeman's consistency proof of $\PFA$ using finite conditions can be generalised quite easily to this context even when $\kappa$ is uncountable. We then prove that $\kappa$-strongly proper forcings satisfy a weak reflection property: to prove that enough filters exist for any $\kappa$-strongly proper forcing, it is enough to prove that enough filters exist for $\kappa$-strongly proper subforcings of size $2^\kappa$. Using this reflection property, together with an argument involving the fact that all $\kappa$-sequences of ordinals added by a $\kappa$-lattice and $\kappa$-strongly proper forcing come from adding a Cohen subset of $\kappa$ (\autoref{cohen-subsets}), we show that the assumption of a supercompact cardinal (or any large cardinal) is in fact unnecessary. We then modify our construction and show the consistency of the forcing axiom together with $2^\kappa$ being arbitrarily large. This modified construction actually shows the consistency of a slightly stronger form of our forcing axiom and does not need the fact that $\kappa$-sequences of ordinals added by a $\kappa$-lattice and $\kappa$-strongly proper forcing come from a $\kappa$-Cohen extension.

Our main result is thus the consistency relative to $\ZFC$, for any given regular cardinal $\kappa$ with $\kappa^{<\kappa}=\kappa$, of the forcing axiom, for families of $\kappa^+$-many dense sets, for the family of forcing notions which are both $\kappa$-lattice and $\kappa$-strongly proper. This is a rather small class, containing $\kappa$-Cohen forcing and the natural forcing for adding a club of $\kappa^+$ with conditions of size less than $\kappa$, but not much more. One consequence of the corresponding forcing axiom, due to the inclusion in the class of the above forcing for adding  a club of $\kappa^+$, is the failure of tail club-guessing on $\kappa^+$ for ordinals of cofinality $\kappa$; in other words, the forcing axiom implies that for every sequence $\langle C_\alpha\mid \alpha\in \kappa^+,\,\cf(\alpha)=\kappa\rangle$, where each $C_\alpha$ is a club of $\alpha$, there is a club $C\subseteq\kappa^+$ such that $C_\alpha\setminus C$ is unbounded in $\alpha$ for every $\alpha<\kappa^+$ of cofinality $\kappa$.\footnote{When $\kappa=\omega$, the consistency of the above club-guessing failure is of course well-known. For $\kappa>\omega$, the consistency of the corresponding club-guessing failure is due to Shelah (e.g., \cite{Shelah:2003}).} One could try to obtain a more useful forcing axiom by considering a slightly broader class of forcing notions. We finish the paper observing that certain natural moves in this direction lead to inconsistent principles.

Throughout the paper we work in $\ZFC+\GCH$ for the sake of simplicity, although many of these results can be proved without $\GCH$ if one is willing to collapse cardinals, as long as one assumes that $\kappa^{<\kappa}=\kappa$ where appropriate.

The structure of the first part of the paper is what we may call an ``onion proof''. We start by sketching Neeman's consistency proof of $\PFA$ in the present context, using a supercompact cardinal. We then prove the weak reflection lemma, which allows us to ``peel off'' the consistency strength of the proof to a mere inaccessible cardinal, and then we show that this too can be reduced to nothing more than $\ZFC$. In the second part of the paper we modify (in \autoref{section7}) our final construction from the first part so as to obtain a model of our forcing axiom with $2^\kappa$ arbitrarily large, and in \autoref{relax} we prove that various natural strengthenings of the forcing axiom are actually false.

\subsection*{Acknowledgements}
The authors would like to thank Menachem Magidor for a stimulating conversation that helped shape this research. They also thank Corey Switzer for his useful comments, and in particular for asking about the status of the forcing axiom in \autoref{incons} when $\kappa$ is inaccessible.

\section{Preliminaries}

We say that a relation $\prec$ is a weak total order on $X$ if the transitive closure of $\prec$ is a total order on $X$. We say that a set $M$ is \textit{$\kappa$-closed} if for every $\alpha<\kappa$, every function $f\colon\alpha\to M$ is already in $M$. In the case of a forcing $\PP$, we say that $\PP$ is $\kappa$-closed if every decreasing sequence of length less than $\kappa$ has a lower bound, and that $\PP$ is $\kappa$-directed closed if every directed set of size less than $\kappa$ has a lower bound. We will say that a forcing is \textit{$\kappa$-lattice} if every set of size less than $\kappa$ of pairwise compatible conditions has a greatest lower bound.\footnote{It would perhaps be more appropriate to call such a forcing notion a \textit{$\kappa$-lower semi-lattice}. However, in the interest of keeping things simple, we will not use this terminology.}

\subsection{Strong properness}
\begin{definition}
Let $M$ be a set and $\PP$ a forcing in $M$. We say that a condition $q\in\PP$ is \textit{strongly $M$-generic} (for $\PP$) if for every $q'\leq q$ there is $\pi_M(q')\in\PP\cap M$ such that every condition in $\PP\cap M$ extending $\pi_M(q')$ is compatible with $q'$.
\end{definition}
\begin{definition}
Let $\QQ$ be a forcing notion and let $\cK$ be a class of models. We say that $\QQ$ is \textit{strongly proper for $\cK$} if for every cardinal $\chi$ and every $M\in\cK$ such that $M\prec H(\chi)$ and $\QQ\in M$, every $p\in\QQ\cap M$ can be extended to a strongly $M$-generic condition.

When $\cK$ is the class of all $\kappa$-closed models $M$ of size $\kappa$, we simply say that $\QQ$ is $\kappa$-strongly proper.
\end{definition}

The following is a generalisation of an observation of Mitchell in \cite{Mitchell:2006}.

\begin{proposition}\label{cohen-subsets}
Suppose that $\kappa^{<\kappa}=\kappa$, and let $\PP$ be a $\kappa$-lattice and $\kappa$-strongly proper forcing notion. Any $\kappa$-sequence of ordinals added by $\PP$ is $\kappa$-Cohen generic.
\end{proposition}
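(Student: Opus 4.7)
My plan is to generalize Mitchell's argument from the $\kappa=\omega$ setting. Take a $\kappa$-closed elementary submodel $M\prec H(\chi)$ of size $\kappa$ with $\kappa\subseteq M$, containing $\PP$ and a $\PP$-name $\dot f$ for the new $\kappa$-sequence; such an $M$ exists by $\kappa^{<\kappa}=\kappa$. After passing below a condition forcing $\dot f\notin V$, extend it to a strongly $M$-generic condition $q$. The strategy is to show (i) that $q$ forces $\dot G\cap M$ to be $V$-generic over $\PP\cap M$, (ii) that $\dot f$ lies in the corresponding extension, and (iii) that $\PP\cap M$ is forcing equivalent to $\Add(\kappa,1)$.

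First I would observe that $\PP\cap M$ inherits the $\kappa$-lattice property from $\PP$: a pairwise compatible subset $A\subseteq\PP\cap M$ of size ${<}\kappa$ lies in $M$ (by $\kappa$-closure of $M$), so by elementarity and the $\kappa$-lattice property of $\PP$ its greatest lower bound also lies in $\PP\cap M$. Strong $M$-genericity of $q$, combined with elementarity, then yields that $q$ forces $\dot G\cap M$ to be $V$-generic over $\PP\cap M$: given a maximal antichain $A\subseteq\PP\cap M$ in $V$ and $q'\le q$, the projection $\pi_M(q')$ is compatible in $\PP\cap M$ with some $a\in A$, and a common lower bound $b\in\PP\cap M$ of $a$ and $\pi_M(q')$ (which exists by elementarity) is then compatible with $q'$ by strong genericity, producing a condition below both $q'$ and $a$.

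Next I would show that $q$ forces $\dot f\in V[\dot G\cap M]$. For each $\alpha<\kappa$, since $\kappa\subseteq M$ we have $\alpha\in M$, so the dense set $E_\alpha=\{p\in\PP:p\text{ decides }\dot f(\alpha)\}$ lies in $M$, and by elementarity $E_\alpha\cap M$ is dense in $\PP\cap M$. Hence $\dot G\cap M$ decides $\dot f(\alpha)$ for every $\alpha<\kappa$, and one recovers $f$ uniformly inside $V[\dot G\cap M]$.

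The main obstacle is the final step: showing that $\PP\cap M$ is forcing equivalent to $\Add(\kappa,1)$. Since $f$ is new and lies in $V[\dot G\cap M]$, the filter $\dot G\cap M$ cannot be principal, so there is a condition $p_0\in\PP\cap M$ forced to be in $\dot G$ such that $\PP\cap M$ is atomless below $p_0$. Then $(\PP\cap M)\restriction p_0$ is a separative atomless ${<}\kappa$-closed poset of size at most $\kappa$; under $\kappa^{<\kappa}=\kappa$ I would construct a dense subset order-isomorphic to the tree $2^{<\kappa}$, recursively picking incompatible pairs of extensions at successor levels (using atomlessness) and greatest lower bounds at limit levels (using the $\kappa$-lattice property), while arranging, through a bookkeeping enumeration of $\PP\cap M$, that every condition is refined by some node. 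This identifies $(\PP\cap M)\restriction p_0$ with $\Add(\kappa,1)$, placing $f$ in a $\kappa$-Cohen generic extension of $V$.
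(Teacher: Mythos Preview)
Your proposal is correct and follows essentially the same route as the paper's proof: pick a $\kappa$-closed elementary $M\prec H(\chi)$ of size $\kappa$, extend to a strongly $M$-generic condition, argue that $G\cap M$ is $V$-generic for $\PP\cap M$ and computes $\dot f$, and identify $\PP\cap M$ with $\kappa$-Cohen forcing. The paper dispatches the last step in one line (``by a back-and-forth argument, using $\kappa^{<\kappa}=\kappa$, $\QQ$ is isomorphic to $\kappa^{<\kappa}$''), whereas you spell out the dense-tree construction and, in fact, are more careful than the paper: you explicitly isolate a $p_0$ below which $\PP\cap M$ is atomless, using that $f\notin V$ forces the trace filter to avoid atoms, a point the paper's back-and-forth remark tacitly assumes.
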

\begin{proof}
Let $\dot f$ be a $\PP$-name and $p\in\PP$ such that $p\forces\dot f$ is a $\check\kappa$-sequence of ordinals. Let $M$ be a $\kappa$-closed elementary submodel of $H(\chi)$, for some large enough $\chi$, such that $\PP,\dot f,p\in M$. We let $\QQ=\PP\cap M$. By elementarity of $M$ and its $\kappa$-closedness we have that $\QQ$ is a $\kappa$-lattice forcing of size $\kappa$, and therefore by a back-and-forth argument, using $\kappa^{<\kappa}=\kappa$, we have that $\QQ$ is isomorphic to $\kappa^{<\kappa}$.

Let $p^*\leq p$ be a strongly $M$-generic condition. Let $G$ be a $V$-generic filter with $p^*\in G$. Then $G\cap\QQ$ is $V$-generic for $\QQ$, and this is forced by $p^*$: given any $q_0\leq p^*$ and any dense subset $D\subseteq\QQ$, we may extend the projection of $q_0$ into $M$, $\pi_M(q_0)$, to a condition $q\in D$, and since $\QQ\subseteq M$, $q\in M$ and therefore compatible with $q_0$ in $\PP$. Also, for every $\alpha<\kappa$ the set $D_\alpha$ of $\PP$-conditions deciding $\dot f(\check\alpha)$ is in $M$, and by elementarity of $M$, $D_\alpha\cap M$ is a dense subset of $\QQ$. This means that $\dot g$ defined by $\{\tup{q,\tup{\check\alpha, \check\beta}}\mid q\forces_\PP\dot f(\check\alpha)=\check\beta, q\in M\}$ is a $\QQ$-name such that $p^*\forces_\PP\dot f=\dot g$, and therefore $\dot f^G=\dot g^{G\cap\QQ}\in V[G\cap\QQ]$.
\end{proof}
The following is clear.
\begin{proposition}
The $\kappa$-support iteration of $\kappa$-lattice forcings is $\kappa$-lattice.\qed
\end{proposition}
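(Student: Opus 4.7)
I would prove this by transfinite induction on the length $\alpha$ of the iteration, showing that each $\PP_\alpha$ is $\kappa$-lattice. For the successor step $\alpha = \beta + 1$, suppose $\{p_i = (q_i, \dot r_i) \mid i < \mu\}$ is pairwise compatible in $\PP_\beta * \dot\QQ_\beta$, with $\mu < \kappa$. The projections $\{q_i \mid i < \mu\}$ are pairwise compatible in $\PP_\beta$ (since any witness $(\bar q, \dot s) \leq p_i, p_j$ projects to $\bar q \leq q_i, q_j$), so by the induction hypothesis they admit a greatest lower bound $q^* \in \PP_\beta$. The plan is then to argue that $q^*$ forces $\{\dot r_i \mid i < \mu\}$ to be pairwise compatible in $\dot\QQ_\beta$, pick a $\PP_\beta$-name $\dot r^*$ such that $q^* \forces \dot r^* = \bigwedge_{i<\mu} \dot r_i$ (using the forced $\kappa$-lattice property of $\dot\QQ_\beta$), and verify that $(q^*, \dot r^*)$ is a greatest lower bound for $\{p_i\}$ in $\PP_{\beta+1}$.

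The lower bound property holds by construction. Maximality is routine: any lower bound $(q', \dot r')$ of $\{p_i\}$ satisfies $q' \leq q_i$ for all $i$, hence $q' \leq q^*$ by the GLB property in $\PP_\beta$, and then $q' \forces \dot r' \leq \dot r_i$ for all $i$, so $q' \forces \dot r' \leq \dot r^*$ by the GLB property in $\dot\QQ_\beta$.

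For a limit stage $\lambda$, I would define the candidate GLB coordinate-wise, with support $\bigcup_{i < \mu} \supp(p_i)$, which has size ${<}\kappa$ by the regularity of $\kappa$. The GLB property is then verified via the induction hypothesis applied to the initial segments $\PP_\gamma$ for $\gamma < \lambda$, using the same coordinate-wise argument as in the successor case.

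The main obstacle I expect is the intermediate claim that $q^*$ forces $\{\dot r_i\}$ to be pairwise compatible in $\dot\QQ_\beta$. A witness $(\bar q_{ij}, \dot s_{ij}) \leq p_i, p_j$ yields $\bar q_{ij} \forces \dot r_i, \dot r_j$ compatible, but $\bar q_{ij}$ is only a lower bound of the pair $q_i, q_j$, whereas $q^*$ is the GLB of the whole family, and these two conditions need not even be compatible below their common upper bound $q_i \wedge q_j$. Overcoming this requires showing that below $q^*$ the collection of conditions forcing $\dot r_i, \dot r_j$ compatible is dense, by carefully combining the $\kappa$-lattice structure of $\PP_\beta$ with the pairwise compatibility witnesses inherited from $\PP_{\beta+1}$.
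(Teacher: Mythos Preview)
The paper offers no proof of this proposition; it is asserted as ``clear'' and marked with a \qed. Your inductive outline is the natural one, and the obstacle you flag in the last paragraph is not a mere technicality --- it is fatal. With the paper's literal definition of $\kappa$-lattice (``every set of size ${<}\kappa$ of \emph{pairwise compatible} conditions has a greatest lower bound''), the proposition is false already for two-step iterations. Take $\PP=\{1,a,b,c\}$ with $a,b,c$ pairwise incompatible atoms below $1$; this is trivially $\kappa$-lattice. Let $\dot\QQ$ name the constant poset $\{1_\QQ,x,y\}$ with $x,y$ incompatible atoms. Let $\dot r_0=\check x$ and let $\dot r_1$ be a name with $a\forces\dot r_1=x$, $b\forces\dot r_1=x$, $c\forces\dot r_1=y$. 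Then $(1,\dot r_0)$ and $(1,\dot r_1)$ are compatible (via $(a,\check x)$), but any common lower bound must have first coordinate in $\{a,b\}$, and both $(a,\check x)$ and $(b,\check x)$ are lower bounds with incomparable first coordinates, so no greatest lower bound exists. Hence the step you were hoping to salvage --- that $q^*$ forces the $\dot r_i$ pairwise compatible --- cannot be salvaged in general.

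What the paper actually needs, and what its own proof of the analogous \autoref{iteration-lattice} establishes, is only the weaker statement that every \emph{directed} set of size ${<}\kappa$ has a greatest lower bound; that proof begins ``Suppose that $\tup{\tup{p_i,s_i}\mid i<\mu}$ is a directed system.'' With the directed reading your argument goes through cleanly: if $\{(q_i,\dot r_i)\mid i<\mu\}$ is directed, then for each $i,j$ there is $k$ with $(q_k,\dot r_k)\leq(q_i,\dot r_i),(q_j,\dot r_j)$, so $q_k\forces\dot r_k\leq\dot r_i,\dot r_j$; since $q^*\leq q_k$, the condition $q^*$ forces $\{\dot r_i\}$ directed, and your obstacle evaporates. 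So your proof is correct for the version of the statement the paper actually uses, and the discrepancy lies in the paper's definition rather than in your reasoning.
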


It will be convenient to adopt the following notation: Given a class $\Gamma$ of forcing notions and a cardinal $\lambda$, $\FA_\lambda(\Gamma)$ is the assertion that for every $\PP\in\Gamma$ and every collection $\mathcal D=\{D_\alpha\mid \alpha<\lambda\}$ of dense subsets of $\PP$ there is a filter $G\subseteq\PP$ such that $G\cap D_\alpha\neq\varnothing$ for all $\alpha<\lambda$. We also say that $G$ is a \emph{$\mathcal D$-generic} filter (of $\PP$).

\begin{definition}The \textbf{$\kappa$-Strongly Proper Forcing Axiom ($\kappa$-$\Str\PFA$)} is \[\FA_{\kappa^+}(\{\PP\mid\PP\text{ is }\kappa\text{-lattice and }\kappa\text{-strongly proper}\}).\]
\end{definition}
We note that unlike the case with $\MA$, where we allow $\cD$ to have any size ${<}2^{\aleph_0}$, here we regard our forcing axiom as an analogue of $\PFA$ and therefore consider only families $\cD$ of size at most $\kappa^+$.
\section{The basic ingredients: supercompact cardinals}\label{section2}
\begin{theorem}\label{thm:supercompact}
Let $\kappa$ be a regular cardinal such that $\kappa^{<\kappa}=\kappa$ and let $\theta>\kappa$ be a supercompact cardinal. Then there is a $\kappa$-lattice and $\kappa$-strongly proper forcing $\PP$ which forces $\theta=\kappa^{++}$ together with $\kappa$-$\Str\PFA$.
\end{theorem}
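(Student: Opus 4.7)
The plan is to adapt Neeman's finite-conditions approach for $\PFA$ to the $\kappa$-strongly proper setting. Using the supercompactness of $\theta$, I would fix a Laver function $\ell\colon\theta\to V_\theta$ with the usual guessing property: for every set $x$ and every sufficiently large regular cardinal $\lambda$ there is an elementary embedding $j\colon V\to N$ with $\crit(j)=\alpha<\theta$, $V_\lambda\subseteq N$, and $j(\ell)(\alpha)=x$. This is obtained by a routine modification of Laver's original construction once a wellorder of $V_\theta$ has been fixed.

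Next, I would define $\PP$ as a forcing with conditions $p=(\mathcal{M}_p,w_p)$, where $\mathcal{M}_p$ is a collection of size ${<}\kappa$ of suitable elementary submodels of $H(\theta)$ organised into a chain-like structure in the style of Neeman's sequences of models of two types---small $\kappa$-closed elementary submodels of size $\kappa$, and larger models of size $\kappa^+$---while $w_p$ is a partial function with domain of size ${<}\kappa$ such that, for each $\alpha\in\dom(w_p)$, $\ell(\alpha)$ is forced by the initial segment $\PP{\restriction}\alpha$ to be a $\kappa$-lattice and $\kappa$-strongly proper forcing, and $w_p(\alpha)$ is a $\PP{\restriction}\alpha$-name for a condition in it; coordinates where $\ell(\alpha)$ fails to be of this form are simply ignored. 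The ordering refines $w$ coordinate-wise and extends $\mathcal{M}$ by inclusion.

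The $\kappa$-lattice property is built into the design: given ${<}\kappa$ pairwise compatible conditions, the coordinate-wise union yields a greatest lower bound, using the $\kappa$-lattice property of each $\ell(\alpha)$ and the fact that a union of ${<}\kappa$ sets of size ${<}\kappa$ still has size ${<}\kappa$ (this uses $\kappa^{<\kappa}=\kappa$). The main obstacle is the verification of $\kappa$-strong properness: given a $\kappa$-closed $M\prec H(\chi)$ of size $\kappa$ with $\PP\in M$ and $p\in\PP\cap M$, one would extend $p$ to $p^*$ by adjoining $M\cap H(\theta)$ to $\mathcal{M}_p$ as a small model. To show that $p^*$ is strongly $M$-generic, for each $q\leq p^*$ one defines $\pi_M(q)\in\PP\cap M$ by restricting $\mathcal{M}_q$ and $w_q$ to objects lying in $M$; the heart of the argument is to verify, by induction along the chain of models in $\mathcal{M}_q$ above $M$, that any extension of $\pi_M(q)$ inside $M$ amalgamates with $q$, applying the $\kappa$-lattice property of each $\ell(\alpha)$ and exploiting the $\kappa$-closure of the small models to absorb the bookkeeping of size ${<}\kappa$.

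Finally, for the forcing axiom, let $\dot\QQ$ name a $\kappa$-lattice $\kappa$-strongly proper forcing in $V^\PP$ and let $\{\dot D_\alpha\mid\alpha<\kappa^+\}$ name a family of dense open subsets. The Laver property yields some $\alpha<\theta$ at which $\ell(\alpha)$ captures $\dot\QQ$ and reflects enough information about the $\dot D_\alpha$. Through the projection $\PP\to\PP{\restriction}\alpha$, the generic $G$ induces a filter on $\dot\QQ_{G{\restriction}\alpha}$ by reading off the value of $w$ at $\alpha$, and a standard density argument using strong $M$-genericity for side models $M\in\mathcal{M}_p$ passing through stage $\alpha$ shows this filter meets each $(\dot D_\alpha)_{G{\restriction}\alpha}$. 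That $\PP\forces\theta=\kappa^{++}$ then follows from a nice-name count of size $\theta$ together with preservation of $\kappa^+$, itself a standard consequence of $\kappa$-strong properness and $\kappa$-closure of the small models.
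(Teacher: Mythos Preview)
Your overall architecture---Neeman-style two-type side conditions together with a Laver-guided iteration---is exactly what the paper does, but there is a genuine error in your choice of models: you take the large type to be ``larger models of size $\kappa^+$'', whereas the argument needs them to be the \emph{transitive} models $H(\alpha)$ for $\alpha$ in a suitable club $E\subseteq\theta$. This is not cosmetic. The presence of $H(\alpha)$ in the side-condition chain is precisely what makes a condition strongly $H(\alpha)$-generic and hence makes $\PP_\alpha=\PP\cap H(\alpha)$ a complete subforcing; without that, there is no projection $\PP\to\PP{\restriction}\alpha$ of the kind you invoke, and the Laver reflection step (``$\ell(\alpha)$ captures $\dot\QQ$'') cannot be carried out. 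The transitive nodes are also what give the $\theta$-chain condition needed for the cardinal arithmetic $\theta=\kappa^{++}$. Nontransitive models of size $\kappa^+$ do appear in other Neeman-style constructions, but not in the supercompact-plus-Laver setup you are aiming for.

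A second, smaller gap: your description of $\pi_M(q)$ as simply ``restrict $\mathcal{M}_q$ and $w_q$ to $M$'' does not work, even in the classical case. The working part $w_q(\alpha)$ is only a name for a condition in $F(\alpha)$, and its naive restriction to $M$ need not name a condition that projects the strongly $M[\dot G_\alpha]$-generic behaviour. The paper (following Neeman) builds $\pi_M(q)$ by a recursion of length $\mu\cdot\omega$, repeatedly extending inside $\PP$ so that at each relevant $\alpha$ one can read off, in $M$, a name $\dot\xi^\alpha$ with the property that any $F(\alpha)$-extension of $\dot\xi^\alpha$ in $M$ is compatible with the current working condition. At limit stages one must take greatest lower bounds on both sides simultaneously, and it is exactly here---and only here---that the $\kappa$-lattice hypothesis (as opposed to mere $\kappa$-directed closure) is used: the greatest lower bound of the $\dot\xi^\alpha_j$'s remains compatible with the greatest lower bound of the $p_j(\alpha)$'s. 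Your plan does not isolate this point, and without it the amalgamation argument for strong properness does not close.
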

We prove this theorem by almost entirely repeating the consistency proof of $\PFA$ by finite conditions given by Neeman in \cite{Neeman:2014}, to the point that the authors cannot take credit for this theorem. We will omit most of the proofs of the subclaims, as they are essentially the same as those of Neeman; instead we will indicate, at the appropriate places, what the relevant claims from \cite{Neeman:2014} are. The rest of this section is devoted to the proof of this theorem.

Let $F\colon\theta\to V_\theta$ be a Laver function for $\theta$ and let $E$ denote the set of strong limit cardinals in $\theta$ of cofinality at least $\kappa^+$.
Let $\cS$ be the set of  $\kappa$-closed $M\prec H(\theta)$ such that $|M|=\kappa$ and let $\cT=\{H(\alpha)\mid\alpha\in E\}$. We define for each $\alpha\in E\cup\{\theta\}$ a forcing $\PP_\alpha$ such that $\PP_\alpha$ is a complete subforcing of $\PP_\beta$ for all $\alpha\leq\beta$ in $E\cup\{\theta\}$. Our forcing $\PP$ will be $\PP_\theta$.

Given $\beta\in E\cup\{\theta\}$, we define $\PP_\beta$ as the collection of all the pairs $\tup{p, s}$ such that:
\begin{enumerate}
\item $s\in[(\cS\cup\cT)\cap H(\beta)]^{<\kappa}$ and $\in$ is a weak total order on $s$.
\item $p$ is a function with $\dom(p)\in [E\cap\beta]^{<\kappa}$ such that for all $\alpha\in\dom(p)$,
\begin{enumerate}
\item $F(\alpha)$ is a $\PP_\alpha$-name such that $\forces_\alpha F(\alpha)$ is a $\kappa$-lattice, $\kappa$-strongly proper forcing notion whose conditions are ordinals,\footnote{It is not really necessary to require conditions in the forcing named by $F(\alpha)$ to be ordinals, but it simplifies things to do so.}
\item $H(\alpha)\in s$, and
\item $p(\alpha)$ is a nice $\PP_\alpha$-name such that $\forces_\alpha p(\alpha)\in F(\alpha)$.
\end{enumerate}
\item For every $\alpha\in\dom(p)$ and every $M\in s\cap\cS$ such that $\alpha\in M$, the pair $\tup{p\restriction\alpha, s\cap H(\alpha)}$ is a condition in $\PP_\alpha$ which forces in $\PP_\alpha$ that $p(\alpha)$ is a strong $F(\alpha)$-master condition for $M[\dot G_\alpha]$.
\end{enumerate}

We define $\tup{p_1, s_1}\leq_\beta\tup{p_0, s_0}$ if the following conditions hold:
\begin{itemize}
\item $s_0\subseteq s_1$,
\item $\dom(p_0)\subseteq\dom(p_1)$, and
\item for all $\alpha\in\dom(p_0)$, $\tup{p_1\restriction\alpha, s_1\cap H(\alpha)}\forces_\alpha p_1(\alpha)\leq_{F(\alpha)} p_0(\alpha)$.
\end{itemize}
To simplify the notation, if $\alpha\in E$ and $\tup{p, s}\in\PP_\beta$ for some $\beta>\alpha$, we will write $\tup{p, s}\restriction\alpha$ to denote $\tup{p\restriction\alpha,s\cap H(\alpha)}$.

Given $\beta\in E\cup\{\theta\}$, we denote by $\PP\restriction\beta$ the partial order $\{\tup{p, s}\in\PP\mid\dom(p)\subseteq\beta\}$. Note that there is no restriction on $s$.

\begin{claim}[Claim~6.5 in \cite{Neeman:2014}] Condition (3) in the definition of $\PP_\beta$ is equivalent to, instead of considering (in the hypothesis) those $M$ such that $\alpha\in M\in s\cap\cS$,  considering (in the hypothesis) those $M$ such that $M\in s\cap\cS$ occurs above $H(\alpha)$ in $s$ and such that no model between $H(\alpha)$ and $M$ is transitive.
\end{claim}

\begin{claim}[Claim~6.6 in \cite{Neeman:2014}]
Let $\alpha<\beta$ be two ordinals in $E\cup\{\theta\}$. Suppose that $\tup{p, s}\in\PP\restriction\beta$ with $H(\alpha)\in s$, and let $\tup{q, t}\in\PP\cap H(\alpha)$ be a condition extending $\tup{p, s}\restriction\alpha$. Then $\tup{p, s}$ and $\tup{q, t}$ are compatible in $\PP\restriction\beta$, as witnessed by $\tup{r, s\cup t}$, where $r=q\cup p\restriction[\alpha,\beta)$.
\end{claim}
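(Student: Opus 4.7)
The approach is to verify directly that $\tup{r, s \cup t}$, with $r = q \cup p \restriction [\alpha, \beta)$, satisfies the three clauses of the definition of $\PP \restriction \beta$ and extends both $\tup{p, s}$ and $\tup{q, t}$; the extension part is immediate, using at levels $\alpha' < \alpha$ that $\tup{q, t} \leq_{\alpha} \tup{p, s} \restriction \alpha$ and, at levels $\alpha' \in \dom p \cap [\alpha, \beta)$, that $r(\alpha') = p(\alpha')$. I then proceed by induction on $\alpha' \in (E \cap (\alpha, \beta]) \cup \{\alpha\}$ showing that $\tup{r \restriction \alpha', (s \cup t) \cap H(\alpha')}$ is a condition in $\PP_{\alpha'}$; the base case $\alpha' = \alpha$ reduces to $\tup{q, t}$ itself, and the case $\alpha' = \beta$ is the claim. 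The size bound $|s \cup t| < \kappa$ is clear by regularity, and the weak total $\in$-order is preserved once one observes that $t \subseteq H(\alpha)$ (since $\tup{q, t} \in H(\alpha)$), $s \cap H(\alpha) \subseteq t$ (as $\tup{q, t}$ extends $\tup{p, s} \restriction \alpha$), and $H(\alpha) \in s$ cleanly separates $t$ from $s \setminus H(\alpha)$ in the $\in$-order; conditions (2)(a)--(c) are inherited from the two given conditions.

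The content is condition (3). For this I appeal to the reformulation from Claim 6.5: at each $\alpha'' \in \dom r$ one need only check the strong $F(\alpha'')$-master-condition property for those $M \in (s \cup t) \cap \cS$ that appear just above $H(\alpha'')$ in the $\in$-order on $s \cup t$, with no transitive model intervening. Split on $\alpha''$: if $\alpha'' < \alpha$, then $H(\alpha)$ is a transitive model in $s \cup t$ that sits above $H(\alpha'')$ and strictly below every $M \in s \setminus t$ in the $\in$-order, so such $M$ are blocked and the surviving relevant models lie in $t$; since $\tup{r \restriction \alpha'', (s \cup t) \cap H(\alpha'')} = \tup{q, t} \restriction \alpha''$, the master-condition property is then supplied directly by $\tup{q, t}$ being a condition in $\PP_\alpha$. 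If $\alpha'' \geq \alpha$, a size argument ($M \in H(\alpha)$ together with $\alpha'' \in M$ would give $\alpha'' \in \tcl(M) \subseteq H(\alpha)$) rules out $M \in t$, and because $t \subseteq H(\alpha) \subseteq H(\alpha'')$ sits entirely below $H(\alpha'')$ in the $\in$-order, the ``just above'' relation in $s \cup t$ coincides with the one in $s$. Claim 6.5 applied to $\tup{p, s}$ at $\alpha''$ then gives the master-condition property for $\tup{p, s} \restriction \alpha''$, and the inductive hypothesis, together with the easily checked extension $\tup{r \restriction \alpha'', (s \cup t) \cap H(\alpha'')} \leq_{\alpha''} \tup{p, s} \restriction \alpha''$, transfers it.

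The main obstacle I anticipate is the seemingly delicate case $\alpha'' \in \dom q \setminus \dom p$ paired with $M \in s \setminus t$, at which level neither $\tup{p, s}$ nor $\tup{q, t}$ directly furnishes master-condition information. The Claim 6.5 reformulation is precisely what dissolves this: the transitive model $H(\alpha) \in s$ separates $H(\alpha'')$ from any such $M$ in the $\in$-order on $s \cup t$, so $M$ falls out of the required check. Once this observation is in place, the entire verification is routine, relying only on inheritance from $\tup{p, s}$ and $\tup{q, t}$ and on the standard fact that extensions of conditions preserve forced statements of the form ``$\dot a$ is a strong master condition for $M[\dot G_{\alpha''}]$''.
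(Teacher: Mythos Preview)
Your argument is correct and is essentially the standard verification one finds in Neeman's paper; the present paper does not give its own proof of this claim but simply defers to \cite[Claim~6.6]{Neeman:2014}. Your use of the Claim~6.5 reformulation to handle condition~(3)---with $H(\alpha)$ acting as a transitive separator that blocks models $M\in s\setminus t$ at coordinates $\alpha''<\alpha$, and the observation that $t\subseteq H(\alpha)\subseteq H(\alpha'')$ makes $t$ irrelevant at coordinates $\alpha''\geq\alpha$---is exactly the intended mechanism.
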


\begin{claim}[Claim~6.7 in \cite{Neeman:2014}]
Let $\beta\in E\cup\{\theta\}$.
\begin{enumerate}
\item Let $\tup{p, s}\in\PP\restriction\beta$ and $\alpha\in E$ be such that $H(\alpha)\in s$. Then $\tup{p, s}$ is a strongly $H(\alpha)$-generic condition for $\PP\restriction\beta$.
\item Let $\tup{p, s}\in\PP\restriction\beta$ and $\alpha\in E$, and suppose that $\tup{p, s}\in H(\alpha)$. Then $\tup{p, s\cup\{H(\alpha)\}}\in\PP\restriction\beta$.
\item $\PP\restriction\beta$ is strongly proper for $\cT$.
\end{enumerate}
\end{claim}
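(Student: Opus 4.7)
I would prove the three parts in the order (2), (1), (3), since (3) is an immediate consequence of the first two.

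For (2), we verify the three defining clauses of $\PP\restriction\beta$ for $\langle p, s\cup\{H(\alpha)\}\rangle$. Because $\langle p,s\rangle\in H(\alpha)$, every element of $s$ has rank below $\alpha$, so placing $H(\alpha)$ $\in$-above all of $s$ preserves the weak total order; and since $\alpha\in E$, $H(\alpha)\in\cT$, so clause~(1) holds. Clause~(2) is unchanged since $p$ is unchanged. For clause~(3), note that $\dom p\subseteq\alpha$ (as $p\in H(\alpha)$), so for every $\gamma\in\dom p$ we have $(s\cup\{H(\alpha)\})\cap H(\gamma)=s\cap H(\gamma)$; moreover, clause~(3) quantifies only over $\cS$-models, and $H(\alpha)\notin\cS$ (the sets in $\cS$ have size $\kappa$, while $|H(\alpha)|>\kappa$ for $\alpha\in E$), so no new instance needs to be checked.

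For (1), Claim~6.6 provides the compatibility engine. Given any $\langle q,t\rangle\leq\langle p,s\rangle$ in $\PP\restriction\beta$, set
\[ \pi_{H(\alpha)}(\langle q,t\rangle) := \langle q\restriction\alpha,\ t\cap H(\alpha)\rangle. \]
A routine check shows this is a condition in $\PP\restriction\alpha$: the weak total order and the cardinality bounds are inherited, and for every $\gamma\in\dom(q\restriction\alpha)$ and every $M\in(t\cap H(\alpha))\cap\cS$ with $\gamma\in M$, the clause~(3) requirement at $\gamma$ for the projection coincides with the corresponding instance for $\langle q,t\rangle$, using that $t\cap H(\gamma)=(t\cap H(\alpha))\cap H(\gamma)$ when $\gamma<\alpha$. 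Since $\alpha\in E$, both $q\restriction\alpha$ and $t\cap H(\alpha)$ lie in $H(\alpha)$, so the projection belongs to $\PP\restriction\beta\cap H(\alpha)$. Now let $\langle q',t'\rangle\in\PP\restriction\beta\cap H(\alpha)$ extend the projection; then $\dom q'\subseteq\alpha$, so $\langle q',t'\rangle\in\PP\cap H(\alpha)$. Because $H(\alpha)\in s\subseteq t$, Claim~6.6 applied to $\langle q,t\rangle$ and $\langle q',t'\rangle$ yields a common extension in $\PP\restriction\beta$, establishing strong $H(\alpha)$-genericity.

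For (3), given $M=H(\alpha)\in\cT$ with $M\prec H(\chi)$ and $\PP\restriction\beta\in M$, and $\langle p,s\rangle\in\PP\restriction\beta\cap M$, part~(2) produces the extension $\langle p,s\cup\{H(\alpha)\}\rangle\in\PP\restriction\beta$, and part~(1) certifies that this extension is strongly $H(\alpha)$-generic. The main obstacle throughout is the clause~(3) bookkeeping in parts~(1) and~(2). Both reduce to the observation that clause~(3) constrains only $\cS$-models while our manipulations (restriction to $H(\alpha)$ and augmentation by $H(\alpha)$) affect only a single $\cT$-model, combined with the fact that the relevant ordinals in $\dom p$ all sit below $\alpha$.
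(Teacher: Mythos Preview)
Your proof is correct and follows exactly the approach the paper defers to (Neeman's Claim~6.7): part~(2) by direct verification that adding a $\cT$-node on top triggers no new clause~(3) obligations, part~(1) by taking $\pi_{H(\alpha)}(q,t)=\tup{q\restriction\alpha,\,t\cap H(\alpha)}$ and invoking Claim~6.6 for compatibility, and part~(3) as their combination. The paper omits the proof entirely and simply points to Neeman, so there is nothing further to compare.
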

\begin{claim}[Claim~6.8 in \cite{Neeman:2014}]
Let $\QQ$ be a $\kappa$-lattice $\kappa$-strongly proper forcing. Fix $\lambda$ such that $\QQ\in H(\lambda)$ and an $\in$-chain $\tup{M_i\mid i<\mu}$ of $\kappa$-closed elementary submodels of $H(\lambda)$ with $\QQ\in M_i$, with $\mu<\kappa$. Suppose that $i^*<\mu$ and $q\in M_{i^*}$ is a strongly $M_i$-generic condition for all $i<i^*$. Then there is some $q'\leq q$ which is a strongly $M_i$-generic for all $i<\mu$. In particular, there is a condition $q\in\QQ$ extending any given $q_0\in M_0$ which is strongly $M_i$-generic for all $i<\mu$.
\end{claim}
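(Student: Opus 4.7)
The plan is to construct, by recursion on $j$ for $i^*\leq j<\mu$, a decreasing sequence $\tup{q_j\mid i^*\leq j<\mu}$ of conditions in $\QQ$ such that each $q_j$ is strongly $M_i$-generic for every $i\leq j$, and $q_j\in M_{j+1}$ whenever $j+1<\mu$; the condition $q'$ will then be produced as a further refinement of this sequence. Two small ingredients are used throughout. First, strong $M$-genericity is preserved under extension: if $q$ is strongly $M$-generic and $q'\leq q$, then any $q''\leq q'$ also extends $q$, so the witness $\pi_M(q'')$ supplied by strong $M$-genericity of $q$ works equally well for $q'$. Second, since the $M_i$ are $\kappa$-closed elementary submodels of size $\kappa$ forming an $\in$-chain, they also form a $\subseteq$-chain, and consequently any initial segment of the sequence built so far lies in the next model.

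At the base I apply $\kappa$-strong properness to $M_{i^*}$ and $q\in\QQ\cap M_{i^*}$ to obtain $q_{i^*}\leq q$ strongly $M_{i^*}$-generic, and I arrange $q_{i^*}\in M_{i^*+1}$ by elementarity of $M_{i^*+1}$, which has $M_{i^*}$ and $q$ among its parameters. At a successor stage $j+1<\mu$, the fact that $q_j\in\QQ\cap M_{j+1}$ lets me apply $\kappa$-strong properness to $M_{j+1}$ and produce $q_{j+1}\leq q_j$ strongly $M_{j+1}$-generic, again chosen in $M_{j+2}$ by elementarity. At a limit stage $\lambda<\mu$, the sequence $\tup{q_j\mid j<\lambda}$ lies in $M_\lambda$ thanks to the $\subseteq$-chain together with $\kappa$-closure of $M_\lambda$; its terms are pairwise compatible and fewer than $\kappa$ in number, so the $\kappa$-lattice property of $\QQ$ delivers a greatest lower bound $q_\lambda^{\ast}\in M_\lambda$, and a further application of $\kappa$-strong properness to $M_\lambda$ then yields $q_\lambda\leq q_\lambda^{\ast}$ strongly $M_\lambda$-generic. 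In each case downward preservation of strong genericity ensures that $q_j$ is strongly $M_i$-generic for all $i\leq j$, as required.

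To finish, if $\mu=\mu'+1$ is a successor I take $q'=q_{\mu'}$; if $\mu$ is a limit I set $q'$ to be the greatest lower bound of $\tup{q_j\mid i^*\leq j<\mu}$, which again exists by $\kappa$-lattice because $|\mu|<\kappa$. Either way $q'\leq q_j$ for every $j$, so downward preservation hands us that $q'$ is strongly $M_i$-generic for every $i<\mu$. The ``In particular'' clause follows by applying the main statement with $i^*=0$ and $q$ any element of $\QQ\cap M_0$, for which the genericity hypothesis is vacuous. The main obstacle in this plan is simply the bookkeeping that keeps every intermediate $q_j$ inside the next model of the chain, so that $\kappa$-strong properness may legitimately be invoked; this is mediated at successor stages by elementarity and at limit stages by $\kappa$-closure, both relying on the upgrade from an $\in$-chain to a $\subseteq$-chain of $\kappa$-closed models of size $\kappa$.
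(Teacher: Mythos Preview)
Your proof is correct and follows essentially the same route as the paper's sketch: build a decreasing sequence by applying $\kappa$-strong properness at successor stages, taking greatest lower bounds via the $\kappa$-lattice property at limit stages, and using elementarity together with $\kappa$-closure to keep each term inside the next model of the $\in$-chain. Your write-up is in fact more careful than the paper's (which has some indexing slips), and your explicit extra application of strong properness at limit stages to secure strong $M_\lambda$-genericity is a harmless refinement of the same idea.
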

\begin{proof}[Sketch of Proof of Claim]
We build a decreasing sequence, $q_j$ for $i^*<j<\mu$, of conditions extending $q$ and such that $q_j\in M_j$ is a strongly $M_j$-generic for all $j>i^*$. At limit steps we use the assumption that $\QQ$ is $\kappa$-lattice and take $q_j$ to be the greatest lower bound of $\tup{q_i\mid i<j}$. At successor steps we simply use the fact that there is an extension of $q_j$ to a strongly $M_j$-generic condition $q_{j+1}$.
We apply elementarity to find $q_{j+1}$ in $M_{j+1}$. Finally, by taking $i^*=0$, the last part of the claim follows immediately.
\end{proof}
\begin{claim}[Claim~6.9 in \cite{Neeman:2014}]
Let $\tup{p, s}\in\PP$ such that for some $\alpha$, $H(\alpha)\in s$ but $\alpha\notin\dom(p)$. Moreover, let $M\in s\cap\cS$ and $\tup{q, t}\in\PP\cap M$ be such that $\alpha\in\dom(q)$ and $\tup{p,s}\leq \tup{q\restriction\theta\setminus\{\alpha\},t}$. If $(s\cap M)\setminus H(\alpha)\subseteq t$, then there is a function $p'$ which extends $p$, with $\dom(p')=\dom(p)\cup\{\alpha\}$, and such that $\tup{p', s}\in\PP$ and $\tup{p',s}\leq\tup{q, t}$.
\end{claim}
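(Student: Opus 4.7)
The plan is to define $p' := p\cup\{\tup{\alpha, p'(\alpha)}\}$, where $p'(\alpha)$ is a nice $\PP_\alpha$-name obtained via the maximum principle from an application of Claim~6.8 inside $V^{\PP_\alpha}$, and then to verify the three defining clauses of $\PP$ for $\tup{p',s}$ and check that $\tup{p',s}\leq\tup{q,t}$.

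First I would pin down the set $\cS_\alpha := \{M'\in s\cap\cS \mid \alpha\in M'\}$. Because $\alpha\in E$ is a cardinal, no $M'\in\cS_\alpha$ can lie in $H(\alpha)$ (else $\alpha\in M'\subseteq H(\alpha)$ would give $|\alpha|<\alpha$), so every element of $\cS_\alpha$ sits strictly above $H(\alpha)$ in the weak total order on $s$, and $\cS_\alpha$ is an $\in$-chain of cardinality $<\kappa$. The hypothesis $(s\cap M)\setminus H(\alpha)\subseteq t$ then tells us that each $M'\in\cS_\alpha$ with $M'\in M$ already belongs to $t$, so clause~(3) of the definition of $\PP$ applied to $\tup{q,t}$ at the coordinate $\alpha$ yields that $\tup{q,t}\restriction\alpha$, and therefore also $\tup{p,s}\restriction\alpha$, forces $q(\alpha)$ to be strongly $M'[\dot G_\alpha]$-generic for each such $M'$.

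Working in $V^{\PP_\alpha}$ below $\tup{p,s}\restriction\alpha$, the family $\{M'[\dot G_\alpha] \mid M'\in\cS_\alpha\}$ is an $\in$-chain, of length $<\kappa$, of $\kappa$-closed elementary submodels of some large $H(\chi)^{V[\dot G_\alpha]}$, and $q(\alpha)\in M[\dot G_\alpha]$ is strongly generic for every member of the chain lying $\in$-below $M[\dot G_\alpha]$. Claim~6.8 thus furnishes an extension of $q(\alpha)$ which is strongly generic for every $M'[\dot G_\alpha]$ with $M'\in\cS_\alpha$, and I would let $p'(\alpha)$ be a nice $\PP_\alpha$-name for such an extension, obtained by the maximum principle.

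Setting $p' := p\cup\{\tup{\alpha, p'(\alpha)}\}$, the inequality $\tup{p',s}\leq\tup{q,t}$ is immediate from $\tup{p,s}\leq\tup{q\restriction\theta\setminus\{\alpha\},t}$ together with the forced $p'(\alpha)\leq_{F(\alpha)} q(\alpha)$. For $\tup{p',s}\in\PP$, clauses~(1) and~(2) are clear; clause~(3) holds at $\alpha$ by construction, at each $\beta<\alpha$ by inheritance since $p'\restriction\beta = p\restriction\beta$, and at each $\beta>\alpha$ by a straightforward inductive check using that $\tup{p'\restriction\beta, s\cap H(\beta)}$ extends $\tup{p\restriction\beta, s\cap H(\beta)}$ in $\PP_\beta$. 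The main obstacle, and the reason the hypothesis $(s\cap M)\setminus H(\alpha)\subseteq t$ is there, is the Claim~6.8 hypothesis that $q(\alpha)$ already be strongly generic for all $M'\in\cS_\alpha$ lying $\in$-below $M$ in the chain; this side-condition assumption is exactly what provides that information.
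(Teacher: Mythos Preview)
Your proposal is correct and follows exactly the intended route. The paper does not supply its own proof of this claim---it simply cites Neeman's Claim~6.9---but your argument is the natural one: isolate the $\in$-chain $\cS_\alpha$ of small models in $s$ containing $\alpha$, use the side-condition hypothesis $(s\cap M)\setminus H(\alpha)\subseteq t$ to see that clause~(3) for $\tup{q,t}$ already makes $q(\alpha)$ strongly generic for the members of $\cS_\alpha$ below $M$, and then invoke Claim~6.8 (with $i^*$ the position of $M$) to extend $q(\alpha)$ to a condition strongly generic for the entire chain. This is precisely how Claim~6.8 is meant to feed into Claim~6.9, and your identification of the role of the hypothesis $(s\cap M)\setminus H(\alpha)\subseteq t$ is exactly right.
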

\begin{claim}[Claim~6.10 in \cite{Neeman:2014}]
Let $\tup{p, s}$ and $\tup{q, t}$ be conditions in $\PP$. Let $M\in s\cap\cS$ such that $\tup{q, t}\in M$. Suppose there is some $\delta<\theta$ such that:
\begin{enumerate}
\item $\tup{p, s}\leq\tup{q\restriction\delta, t}$ and $(\dom(p)\cap\dom(q))\setminus\delta=\varnothing$, and
\item $(s\cap M)\setminus H(\delta)\subseteq t$.
\end{enumerate}
Then there is a function $p'$ extending $p$ such that $\dom(p')=\dom(p)\cup(\dom(q)\setminus\delta)$ and such that $\tup{p', s}\in\PP$ extends $\tup{q, t}$.
\end{claim}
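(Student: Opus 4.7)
The plan is to construct $p'$ by transfinite recursion, adding one coordinate of $\dom q \setminus \delta$ at a time via Claim~6.9. Enumerate $\dom q \setminus \delta$ in increasing order as $\{\alpha_\xi : \xi < \eta\}$, where $\eta < \kappa$, and build $p_\xi$ for $\xi \leq \eta$ with $\dom p_\xi = \dom p \cup \{\alpha_\zeta : \zeta < \xi\}$ satisfying $\tup{p_\xi, s} \in \PP$ and $\tup{p_\xi, s} \leq \tup{q \restriction D_\xi, t}$, where $D_\xi = \delta \cup \{\alpha_\zeta : \zeta < \xi\}$; then set $p' = p_\eta$. The base case $p_0 = p$ works by hypothesis~(1).

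At a successor stage $\xi + 1$, I apply Claim~6.9 with input condition $\tup{p_\xi, s}$, ordinal $\alpha_\xi$, and target condition $\tup{q \restriction (\alpha_\xi + 1), t}$. Its hypotheses are verified as follows: $t \subseteq s$ (forced by the order relation on $\PP$), whence $H(\alpha_\xi) \in t \subseteq s$ as required; $\alpha_\xi \notin \dom p_\xi$ follows from hypothesis~(1) together with the distinctness of the $\alpha_\zeta$; $\alpha_\xi \in M$ because $M$ is $\kappa$-closed with $\dom q \in M$ of size ${<}\kappa$, yielding $\dom q \subseteq M$, so $q \restriction (\alpha_\xi + 1) \in M$ and $\tup{q \restriction (\alpha_\xi + 1), t} \in \PP \cap M$; the extension clause becomes $\tup{p_\xi, s} \leq \tup{q \restriction \alpha_\xi, t} = \tup{q \restriction D_\xi, t}$, which holds by the inductive hypothesis; and $(s \cap M) \setminus H(\alpha_\xi) \subseteq (s \cap M) \setminus H(\delta) \subseteq t$ since $\alpha_\xi \geq \delta$ and by hypothesis~(2). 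Claim~6.9 then delivers $p_{\xi+1}$ with all the required properties, and $\tup{p_{\xi+1}, s} \leq \tup{q \restriction D_{\xi+1}, t}$ because $D_{\xi+1} = D_\xi \cup \{\alpha_\xi\}$.

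At a limit stage $\xi$, set $p_\xi = \bigcup_{\zeta < \xi} p_\zeta$. The domain is correct and the extension relation $\tup{p_\xi, s} \leq \tup{q \restriction D_\xi, t}$ is immediate. The substantive task is verifying condition~(3) in the definition of $\PP$ for $\tup{p_\xi, s}$. For a newly added coordinate $\alpha = \alpha_\zeta$, no later stage modifies the function strictly below $\alpha_\zeta$, so $p_\xi \restriction \alpha_\zeta = p_{\zeta+1} \restriction \alpha_\zeta$, and condition~(3) at stage $\zeta + 1$ transfers. For an old $\alpha \in \dom p$, let $\xi_\alpha$ be the order-type of $\{\alpha_\zeta : \zeta < \xi,\, \alpha_\zeta < \alpha\}$; when $\xi_\alpha < \xi$ the inductive hypothesis at stage $\xi_\alpha$ handles matters, but when $\xi_\alpha = \xi$ we face a genuine limit situation inside $\PP_\alpha$, which I resolve by nesting the same recursion at level $\alpha$, closing via a transfinite induction on $\alpha$ that leans on the $\kappa$-lattice property (as in Claim~6.8) one step down in the iteration. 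This last case is the main obstacle; the remainder of the argument is routine bookkeeping around Claim~6.9.
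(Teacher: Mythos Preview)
The paper does not give its own proof of this claim; it simply cites Neeman's Claim~6.10 and asserts that the argument carries over. Neeman's proof (for $\kappa=\omega$) is exactly your iteration of Claim~6.9 along the coordinates of $\dom q\setminus\delta$, and your successor step is a faithful and correct transcription of that argument into the present setting; the verification of the hypotheses of Claim~6.9 is fine.

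The only place where the uncountable-$\kappa$ version genuinely deviates from Neeman is the limit stage, which you rightly flag as the main obstacle. Your treatment there is correct in spirit but more tangled than necessary. Instead of a nested induction on $\alpha$, you can simply observe that the sequence $\langle\tup{p_\zeta,s}\mid\zeta<\xi\rangle$ is $\leq_\PP$-decreasing with constant side-condition component $s$, and appeal to the fact that $\PP$ is $\kappa$-lattice (the claim immediately following this one in the paper, whose proof does not use the present claim and could harmlessly be moved earlier). The greatest lower bound produced there is exactly $\tup{\bigcup_{\zeta<\xi}p_\zeta,\,s}$, since each coordinate value $p_\zeta(\alpha)$ is set once and never altered afterwards; in particular it is a condition in $\PP$, and the extension relations $\tup{p_\xi,s}\leq\tup{q\restriction D_\xi,t}$ follow at once. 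This replaces your ``nested recursion at level $\alpha$'' entirely and dissolves the case split on $\xi_\alpha$.
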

\begin{claim}[Corollary~6.11 in \cite{Neeman:2014}] Let $M\in\cS$ and let $\tup{p, s}\in\PP\cap M$. Then there is condition $\tup{q, t}\in\PP$ that extends $\tup{p, s}$ and is such that $M\in t$.
\end{claim}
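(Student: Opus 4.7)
The plan is to derive the statement as a more or less direct application of Claim~6.10, with a cleverly chosen ``initial'' condition that already has $M$ in its side component. Concretely, I would apply Claim~6.10 with its $\tup{p,s}$ taken to be $\tup{\varnothing,\,s\cup\{M\}}$, its $\tup{q,t}$ taken to be the given $\tup{p,s}\in\PP\cap M$, and $\delta=0$. If the hypotheses check out, Claim~6.10 returns a function $p'$ with $\dom p' = \dom p$ such that $\tup{p',\,s\cup\{M\}}\in\PP$ extends $\tup{p,s}$, which is exactly the condition $\tup{q,t}$ asked for by the corollary (with $t=s\cup\{M\}$).

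The first step is to verify that $\tup{\varnothing,\,s\cup\{M\}}$ is itself a condition in $\PP$. Clauses (2)--(3) of the definition of $\PP_\beta$ are vacuous for the empty working part, so all that needs attention is clause (1): that $\in$ remains a weak total order on $s\cup\{M\}$. It suffices to establish $s\subseteq M$, so that $M$ sits $\in$-above every element of $s$ and extends the existing order with $M$ at the top. This is a standard consequence of the $\kappa$-closure of $M$: applying $\kappa$-closure to constant functions $\alpha\to\{\varnothing\}$ for $\alpha<\kappa$ gives $\kappa\subseteq M$, and since $s\in M$ with $|s|<\kappa$, elementarity produces a bijection $g\colon|s|\to s$ in $M$ whose values $g(\beta)$ (for $\beta<|s|\subseteq\kappa\subseteq M$) are all in $M$, yielding $s\subseteq M$.

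With $\tup{\varnothing,\,s\cup\{M\}}$ confirmed as a condition, the remaining hypotheses of Claim~6.10 are routine bookkeeping. The ordering $\tup{\varnothing,\,s\cup\{M\}}\leq\tup{p\restriction 0,\,s}$ reduces to $s\subseteq s\cup\{M\}$; the domain-disjointness condition is trivial with empty left-hand domain; and the containment $((s\cup\{M\})\cap M)\setminus H(0)\subseteq s$ collapses to $s\subseteq s$ via $M\notin M$ (foundation) together with $s\subseteq M$. Claim~6.10 then produces the desired $p'$, and $\tup{p',\,s\cup\{M\}}$ is the extension the corollary asks for. The only spot where anything nontrivial happens is the verification $s\subseteq M$ (and the associated check that the new side component is still weakly totally ordered by $\in$); beyond that, the whole argument is hypothesis-matching, with all real combinatorial content already packaged inside Claim~6.10.
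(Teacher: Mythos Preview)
Your proposal is correct and matches the paper's intended approach: the claim is labeled ``Corollary~6.11'' precisely because it is meant to be read off from Claim~6.10, and your instantiation $\tup{p,s}\mapsto\tup{\varnothing,s\cup\{M\}}$, $\tup{q,t}\mapsto\tup{p,s}$, $\delta=0$ is exactly the right one. The only substantive verification---that $s\subseteq M$ so that $\in$ remains a weak total order on $s\cup\{M\}$---is handled correctly via the $\kappa$-closure of $M$; one might add, for completeness, that $M\notin N$ for any $N\in s$ (immediate from foundation if $N\in\cS$, and from transitivity of $N$ if $N\in\cT$), but this is implicit in your argument.
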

\begin{claim}\label{iteration-lattice}
$\PP$ is $\kappa$-lattice.
\end{claim}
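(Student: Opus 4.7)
The plan is to prove by induction on $\beta\in E\cup\{\theta\}$ that $\PP\restriction\beta$ is $\kappa$-lattice. Given a pairwise compatible family $\{\tup{p_i,s_i}\mid i<\mu\}$ in $\PP\restriction\beta$ with $\mu<\kappa$, I would construct its greatest lower bound $\tup{p,s}$ coordinate by coordinate. First, put $s=\bigcup_{i<\mu}s_i$; this has size ${<}\kappa$ by regularity of $\kappa$, and carries $\in$ as a weak total order, since any two elements $M\in s_i$, $N\in s_j$ lie together in the side condition of any common extension of $\tup{p_i,s_i}$ and $\tup{p_j,s_j}$. For the working part $p$, I would enumerate $\bigcup_{i<\mu}\dom p_i$ in increasing order and, at each $\alpha\in\bigcup_{i<\mu}\dom p_i$, define $p(\alpha)$ as a $\PP_\alpha$-name forced by $\tup{p\restriction\alpha,s\cap H(\alpha)}$ to be the greatest lower bound in $F(\alpha)$ of $\{p_i(\alpha)\mid i\in I_\alpha\}$, where $I_\alpha=\{i<\mu\mid\alpha\in\dom p_i\}$.

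To carry this out I would maintain, as an inner invariant running on $\alpha\in E\cap(\beta+1)$, that $\tup{p\restriction\alpha,s\cap H(\alpha)}$ is a condition in $\PP\restriction\alpha$ and is the greatest lower bound there of $\{\tup{p_i\restriction\alpha,s_i\cap H(\alpha)}\mid i<\mu\}$. The induction step demands that $\tup{p\restriction\alpha,s\cap H(\alpha)}$ force pairwise compatibility in $F(\alpha)$ of $\{p_i(\alpha)\mid i\in I_\alpha\}$; once this is secured, the forced $\kappa$-lattice property of $F(\alpha)$ produces the name $p(\alpha)$, and the remaining parts of the invariant follow by unwinding the definition of the order on $\PP\restriction\alpha$.

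This forced pairwise compatibility is the main obstacle. For each pair $i,j\in I_\alpha$, pairwise compatibility in $\PP\restriction\beta$ provides a common extension $\tup{r,u}$, whose restriction $\tup{r\restriction\alpha,u\cap H(\alpha)}$ lies in $\PP\restriction\alpha$, extends both $\tup{p_i\restriction\alpha,s_i\cap H(\alpha)}$ and $\tup{p_j\restriction\alpha,s_j\cap H(\alpha)}$ (hence, by the inner invariant, extends $\tup{p\restriction\alpha,s\cap H(\alpha)}$) and forces $r(\alpha)$ to witness compatibility of $p_i(\alpha)$ and $p_j(\alpha)$. Upgrading the existence of one such extension into a forcing by $\tup{p\restriction\alpha,s\cap H(\alpha)}$ itself requires combining this with the strong $H(\alpha)$-genericity of the $\tup{p_l,s_l}$'s from the earlier claims of this section: any extension $\tup{q,t}$ of $\tup{p\restriction\alpha,s\cap H(\alpha)}$ in $\PP\restriction\alpha$ is an extension of each reduct $\tup{p_l\restriction\alpha,s_l\cap H(\alpha)}$, and so is compatible with the respective $\tup{p_l,s_l}$ in $\PP\restriction\beta$; iterating this with the common extension $\tup{r,u}$ via the extension arguments from the preceding claims would yield a single condition below $\tup{q,t}$, $\tup{p_i,s_i}$, and $\tup{p_j,s_j}$, whose $\alpha$-reduct extends $\tup{q,t}$ and forces compatibility of $p_i(\alpha)$ and $p_j(\alpha)$.

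The remaining verifications are straightforward. Condition~(3) in the definition of $\PP$ holds at $\tup{p,s}$ because any $M\in s\cap\cS$ with $\alpha\in M$ lies in some $s_{i_0}$, so $p_{i_0}(\alpha)$ is already a strong $F(\alpha)$-master for $M[\dot G_\alpha]$, and this property is inherited by $p(\alpha)\leq_{F(\alpha)}p_{i_0}(\alpha)$ via the same projection $\pi_M$. Finally, $\tup{p,s}$ is the greatest lower bound: any lower bound $\tup{r,u}$ satisfies $u\supseteq s$ and forces each $r(\alpha)$ to extend every $p_i(\alpha)$ for $i\in I_\alpha$, and hence to extend their GLB $p(\alpha)$.
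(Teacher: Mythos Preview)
Your proposal correctly identifies the central obstacle---showing that $\tup{p\restriction\alpha,s\cap H(\alpha)}$ forces $\{p_i(\alpha)\mid i\in I_\alpha\}$ to be pairwise compatible in $F(\alpha)$---but the argument you offer to resolve it does not go through. You want, for arbitrary $\tup{q,t}\leq\tup{p\restriction\alpha,s\cap H(\alpha)}$ in $\PP_\alpha$, to produce a common extension of $\tup{q,t}$, $\tup{p_i,s_i}$, and $\tup{p_j,s_j}$. Claim~6.6 lets you amalgamate $\tup{q,t}$ with each of $\tup{p_i,s_i}$ and $\tup{p_j,s_j}$ separately, and you also have the common extension $\tup{r,u}$ of $\tup{p_i,s_i}$ and $\tup{p_j,s_j}$; but this is precisely pairwise compatibility among three conditions, which does not by itself yield a three-way lower bound. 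The natural attempt to amalgamate $\tup{q,t}$ directly with $\tup{r,u}$ via Claim~6.6 fails because $\tup{q,t}$ need not extend $\tup{r,u}\restriction\alpha$: the latter is merely \emph{some} common extension of $\tup{p_i,s_i}\restriction\alpha$ and $\tup{p_j,s_j}\restriction\alpha$, and may well be incompatible with $\tup{q,t}$ below the GLB of all the restrictions. A secondary issue: your side condition $s=\bigcup_i s_i$ need not have $\in$ as a weak total order, since the $\in$-chain witnessing comparability of $M\in s_i$ and $N\in s_j$ inside the side condition of a common extension may pass through models not in $\bigcup_i s_i$.

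The paper's proof sidesteps both difficulties by treating only \emph{directed} systems $\tup{\tup{p_i,s_i}\mid i<\mu}$ rather than arbitrary pairwise compatible families. For a directed system, whenever $\tup{p_k,s_k}\leq\tup{p_i,s_i},\tup{p_j,s_j}$ the restriction $\tup{p_k,s_k}\restriction\alpha$ already forces $p_k(\alpha)\leq p_i(\alpha),p_j(\alpha)$; since the restrictions again form a directed system and (by induction) their GLB lies below each $\tup{p_k,s_k}\restriction\alpha$, that GLB forces $\{p_l(\alpha)\mid l\in I_\alpha\}$ to be directed in $F(\alpha)$, so the $\kappa$-lattice hypothesis on $F(\alpha)$ applies without further work. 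The paper also takes $s$ to be the closure of $\bigcup_i s_i$ under intersections. The directed case is all that is used downstream; whether the full pairwise-compatible version of the claim actually holds for this iteration is not established by either argument.
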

\begin{proof}
Suppose that $\tup{\tup{p_i, s_i}\mid i<\mu}$ is a directed system of conditions with $\mu<\kappa$. Let $p$ be the function with domain $\bigcup_{i<\mu}\dom(p_i)$ such that for each $i<\mu$ and $\alpha\in \dom(p_i)$, $p(\alpha)$ is a canonical $\PP_\alpha$-name for a condition forced to be the greatest lower bound of $\{p_j(\alpha)\mid i\leq j<\mu\}$ provided $\{p_j(\alpha)\mid i\leq j<\mu\}$ is a directed set of conditions in $F(\alpha)$. Let also $s$ be the closure of $\bigcup_{i<\mu}s_i$ under intersections. It is then immediate to verify that $\tup{p, s}$ is a lower bound of $\{\tup{p_i, s_i}\mid i<\mu\}$, and it is indeed the greatest lower bound by construction.
\end{proof}
\begin{claim}
Let $\beta\in E$, $\tup{p, s}\in\PP$, and $M\in s\cap\cS$ be such that $\beta\in M$. Suppose that $\tup{p',s'}\in\PP_\beta$ extends $\tup{p, s}\restriction\beta$. Then given any $\tup{\bar p,\bar s}\in\PP_\beta\cap M$ such that $\tup{p',s'}\leq\tup{\bar p,\bar s}$, there is $\pi_M(p',s')\in\PP_\beta\cap M$ such that $\pi_M(p',s')$ extends $\tup{\bar p,\bar s}$ and such that every $\tup{q, t}\in M$ which extends $\pi_M(p',s')$ is compatible with $\tup{p', s'}$. In particular, $\tup{p, s}\restriction\beta$ is a strongly $M$-generic condition for $\PP_\beta$ whenever $\beta\in M\in s$.
\end{claim}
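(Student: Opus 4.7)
I would prove this by induction on $\beta \in E \cup \{\theta\}$, constructing $\pi_M(p',s')$ by projecting $(p',s')$ coordinate-by-coordinate into $M$. Set $s^* := s' \cap M$: since $|s'|<\kappa$ and $M$ is $\kappa$-closed of size $\kappa$, both $s^*$ and $\dom p' \cap M$ lie in $M$. For each $\alpha \in \dom p' \cap M$, the inductive hypothesis gives that $\tup{p',s'}\restriction\alpha$ is strongly $M$-generic for $\PP_\alpha$ (applying the claim inductively with $\tup{p,s}$ replaced by $\tup{p',s'}$, noting $M \in s \subseteq s'$ and $\alpha \in M$), and clause~(3) of the definition of $\PP_\beta$ forces $p'(\alpha)$ to be a strong $F(\alpha)$-master condition for $M[\dot G_\alpha]$. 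Using the strong-$F(\alpha)$-genericity of $p'(\alpha)$ inside $V^{\PP_\alpha}$ (below $\bar p(\alpha)$ if $\alpha \in \dom \bar p$, and below the top condition otherwise), I pick a nice $\PP_\alpha$-name $p^*(\alpha) \in M$ for a condition in $F(\alpha) \cap M[\dot G_\alpha]$ extending $\bar p(\alpha)$, with the strong-genericity projection property. Set $\pi_M(p',s') := \tup{p^*,s^*}$ where $\dom p^* := \dom p' \cap M$.

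The verification that $\pi_M(p',s') \in \PP_\beta \cap M$ and extends $\tup{\bar p,\bar s}$ is routine: $s^*$ inherits the weak $\in$-ordering from $s'$; for each $\alpha \in \dom p^*$, $H(\alpha) \in s^*$ since $H(\alpha) \in s'$ and $\alpha \in M$; clause~(3) of the definition of $\PP_\beta$ transfers from $\tup{p',s'}$ to $\pi_M(p',s')$ via elementarity of $M$, since every $N \in s^* \cap \cS$ with $\alpha \in N$ already lies in $s' \cap \cS$, so the projection $p^*(\alpha)$ inherits the master-condition property for $N[\dot G_\alpha]$ from $p'(\alpha)$. Membership of $\pi_M(p',s')$ in $M$ follows from $\kappa$-closure of $M$, and extension of $\tup{\bar p,\bar s}$ is by construction.

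To verify compatibility of extensions of $\pi_M(p',s')$ inside $M$ with $\tup{p',s'}$, given $\tup{q,t} \in M$ extending $\pi_M(p',s')$ I construct a common lower bound $\tup{r,u}$ of $\tup{p',s'}$ and $\tup{q,t}$ following the recipe of Claim~6.10. Put $u := s' \cup t$: this is weakly $\in$-totally ordered of size $<\kappa$, since $t \subseteq M$ (by $\kappa$-closure of $M$ applied to $t \in M$), $s' \cap M \subseteq t$, and any $N' \in s' \setminus M$ has $M$ in its $\in$-transitive closure (otherwise $N' \in \ldots \in M$ would yield $N' \in M$, a contradiction), whence every $N \in t \subseteq M$ lies $\in$-transitively below every $N' \in s' \setminus M$. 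Set $\dom r := \dom p' \cup \dom q$ and let $r(\alpha) := p'(\alpha)$ on $\dom p' \setminus \dom q$, $r(\alpha) := q(\alpha)$ on $\dom q \setminus \dom p'$, and on the overlap $\dom p' \cap \dom q \subseteq \dom p' \cap M$ let $r(\alpha)$ be a $\PP_\alpha$-name for the greatest lower bound of $p'(\alpha)$ and $q(\alpha)$ in $F(\alpha)$, which exists by the $\kappa$-lattice hypothesis on $F(\alpha)$ together with the compatibility of $q(\alpha) \in M[\dot G_\alpha]$ with $p'(\alpha)$ coming from the strong-$F(\alpha)$-genericity of $p'(\alpha)$. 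Clause~(3) for $\tup{r,u}$ at each pair $(\alpha,N)$ holds by cases on whether $N \in s'$ or $N \in t$, each case providing the master-condition property via $p'(\alpha)$ or $q(\alpha)$ respectively and hence via $r(\alpha)$. The ``in particular'' conclusion follows by taking $\tup{\bar p,\bar s}$ to be the trivial condition of $\PP_\beta$.

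The main obstacle is the coordinate-wise construction of $p^*(\alpha)$: one has to dovetail the inductive strong $M$-genericity of $\tup{p',s'}\restriction\alpha$ for $\PP_\alpha$ with the internal strong-$F(\alpha)$-genericity of $p'(\alpha)$ inside $V^{\PP_\alpha}$, ensuring simultaneously that $p^*(\alpha)$ is a $\PP_\alpha$-name in $M$, that it extends $\bar p(\alpha)$, and that it carries the required projection property on which the later compatibility argument relies. A secondary subtlety is verifying clause~(3) of the projection for \emph{every} $N \in s^* \cap \cS$ containing $\alpha$ rather than for $M$ alone; this is where elementarity of $M$ and the weak $\in$-ordering on $s$ do the real work. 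Once these are handled, membership of $\pi_M(p',s')$ in $M$ and the construction of the common extension reduce to the patterns of Claims~6.9 and~6.10.
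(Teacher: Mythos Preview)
Your outline has the right global shape, but the central step---producing each $p^*(\alpha)$ as a $\PP_\alpha$-name \emph{in $M$}---is precisely where the argument breaks down, and your proposal does not explain how to carry it out. You correctly identify this as ``the main obstacle,'' but then simply assert that such a name can be picked. The difficulty is real: the projection of $p'(\alpha)$ into $M[\dot G_\alpha]$ is definable only from $p'(\alpha)$, which need not lie in $M$, so there is no reason a single name in $M$ should track it across all generics below $\tup{p',s'}\restriction\alpha$. Relatedly, for $\tup{p^*,s^*}$ to extend $\tup{\bar p,\bar s}$ you need $\tup{p^*,s^*}\restriction\alpha$---not $\tup{p',s'}\restriction\alpha$---to force $p^*(\alpha)\leq\bar p(\alpha)$, so the $p^*(\alpha)$'s at different coordinates are entangled and cannot be chosen independently.

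The paper resolves this not by a one-shot definition but by an iterative construction of length $\mu\cdot\omega$: starting from $\tup{p',s'}$, one repeatedly strengthens the condition and, with bookkeeping, decides at each step for some $\alpha$ a name $\dot\xi^\alpha_j\in M$ with the projection property relative to the \emph{current} $p_j(\alpha)$; the induction hypothesis for $\PP_\alpha$ is used to arrange that the successive $\dot\xi^\alpha_j$'s form a decreasing sequence in $F(\alpha)$. The $\kappa$-lattice hypothesis on $F(\alpha)$ is then invoked at limit stages to take greatest lower bounds of these projection names---and crucially, because these are \emph{greatest} lower bounds, compatibility with the corresponding limit of the $p_j(\alpha)$'s is preserved. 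The paper explicitly flags this as the one place where $\kappa$-lattice (rather than mere $\kappa$-directed closure) is needed; your argument uses the lattice property only to take binary meets in the compatibility verification, which is a strictly weaker use and a signal that the essential application has been missed.
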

\begin{proof}
The case where $\kappa=\omega$, i.e.\ when we deal with the usual notion of a strongly proper forcing, was proved by Neeman in \cite{Neeman:2014}. We therefore assume $\kappa>\omega$. We prove the claim by induction on $\beta$. Let $\tup{\bar p, \bar s}\in M$ be a condition such that $\tup{p', s'}\leq\tup{\bar p, \bar s}$. Let $\tup{\alpha_i\mid i<\mu}$, for some $\mu<\kappa$, be the strictly increasing enumeration of $\dom(p')\cap M$. Without loss of generality we may assume $\mu>0$, as otherwise the conclusion is immediate.

Using the previous claim that $\PP$ is $\kappa$-lattice and suitable bookkeeping, we may build a $\leq$-decreasing sequence in $\PP$, $\tup{\tup{p_i, s_i}\mid i\leq\mu\cdot\omega}$, where $\tup{p_0,s_0}=\tup{p',s'}$ and for every $i<\mu\cdot\omega$ and $\alpha\in\dom(p_i)\cap M$ there is some $j>i$ such that $\tup{p_j, s_j}\restriction\alpha\in\PP_\alpha$ decides, for some name $\dot{\xi}_j^\alpha\in M$ for an ordinal, that $\dot{\xi}_j^\alpha$ is a condition in $F(\alpha)$ such that every $F(\alpha)$-condition in $M\cap F(\alpha)$ extending $\dot{\xi}^\alpha_j$ is $F(\alpha)$-compatible with $p_j(\alpha)$.

By suitable applications of the induction hypothesis we can make sure that for every $\alpha$ and every increasing sequence of indices $\tup{j_\eta\mid\eta<\nu}$ such that $\dot{\xi}^\alpha_{j_\eta}$ is defined, $\langle \dot{\xi}^\alpha_{j_\eta} \mid\eta<\nu\rangle$ is forced to be a decreasing sequence of conditions in $F(\alpha)$.

Given any $\alpha<\theta$, if a limit stage $i$ of the construction is such that we have dealt with $\alpha$ (i.e., $\dot{\xi}^\alpha_j$ has been defined) cofinally often below $i$, then we let $\dot{\xi}^\alpha_i$ be a $\PP_\alpha$-name for the greatest lower bound of $\{\dot{\xi}^\alpha_j\mid j\in I\}$ in $F(\alpha)$---where $I$ is the cofinal subset of $j\in i$ for which $\dot{\xi}^\alpha_j$ is defined. Since $\dot{\xi}^\alpha_i$ is forced to be the greatest lower bound of $\{\dot{\xi}^\alpha_j\mid j\in I\}$, rather than an arbitrary lower bound of this set, the greatest lower bound of $\{p_j(\alpha)\mid j\in I\}$ is forced to be compatible with $\dot{\xi}^\alpha_i$, and so the construction can keep going. This is the only place where we use the fact that the forcings $F(\alpha)$ are forced to be $\kappa$-lattice, rather than just $\kappa$-closed or even $\kappa$-directed closed.

Let $\tup{p^*, s^*}=\tup{p_{\mu\cdot\omega}, s_{\mu\cdot\omega}}$. We may---and we do---set up our bookkeeping in such a way that $\tup{\bar p^*, s^*\cap M}\in M$ is a condition in $\PP$ extending $\tup{\bar p,\bar s}$. $\tup{\bar p^*,s^*\cap M}$ will be our $\pi_M(p',s')$.

Suppose now that $\tup{q, t}\in M$ is a condition in $\PP$ extending $\tup{\bar p^*,s^*\cap M}$. It is enough to prove that $\tup{q,t}$ is compatible with $\tup{p^*, s^*}$. For this, we let $\tup{\alpha_i\mid i<\mu^*}$, for some $\mu^*<\kappa$, be the strictly increasing enumeration of $\dom(p^\ast)\cup \dom(q)$. We may assume for simplicity that $\mu^*$ is a limit ordinal. We build a decreasing sequence $\langle \tup{q_i, t_i}\mid i\leq\mu^*\rangle$ of $\PP$-conditions such that each $\tup{q_{i+1}, t_{i+1}}$ is a condition in $\PP\restriction\alpha_i$ extending $\tup{p^*\restriction\alpha_i, s^*}$ and $\tup{q\restriction\alpha_i, t}$, taking greatest lower bounds at limit stages. The desired common extension of $\tup{q, t}$ and $\tup{p^*, s^*}$ will be $\tup{q_{\mu^*}, t_{\mu^*}}$. At successor stages $i+1$ for which $\alpha_i\in \dom(q)\cap \dom(p^*)$ we apply the fact that $F(\alpha_i)$ is forced to be $\kappa$-directed closed to find $p_i(\alpha_i)$ which is forced to extend $q_i(\alpha_i)$ and $p^*(\alpha_i)$ in $F(\alpha_i)$, noting that $p^*(\alpha_i)$ is, by construction, a name forced to be the greatest lower bound of a decreasing sequence in $F(\alpha_i)$ of $|i|$-many conditions compatible with $q(\alpha_i)$. This completes the proof.
\end{proof}
To complete the proof of \autoref{thm:supercompact} we observe that the following is a corollary from the above (an analogous corollary appears in \cite{Neeman:2014}).
\begin{corollary}\label{iteration-strongly-proper} For every $\alpha\leq\theta$, $\PP_\alpha$ is $\kappa$-strongly proper.\qed
\end{corollary}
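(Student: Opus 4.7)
The plan is to derive this corollary directly from the two claims immediately preceding it, with essentially no new work. Fix $\alpha\in E\cup\{\theta\}$, a sufficiently large cardinal $\chi$, and a $\kappa$-closed elementary submodel $M\prec H(\chi)$ of size $\kappa$ with $\PP_\alpha\in M$; after a routine reflection step one may assume $M\cap H(\theta)\in\cS$ and work with that intersection, so for concreteness take $M\in\cS$. Let $\tup{p,s}\in\PP_\alpha\cap M$; the task is to extend it to a strongly $M$-generic condition for $\PP_\alpha$.

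The single move is to view $\tup{p,s}$ as a condition in $\PP=\PP_\theta$---this is legal since $\dom p\subseteq\alpha$ implies every defining clause of $\PP_\theta$ is automatically satisfied---and to apply Corollary~6.11 (the claim labelled ``Corollary~6.11'' above) to $M$ and $\tup{p,s}\in\PP\cap M$. This produces $\tup{q,t}\in\PP$ with $\tup{q,t}\leq\tup{p,s}$ and $M\in t$.

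For $\alpha\in E$: since $\PP_\alpha\in M$ we have $\alpha\in M\in t$, so the ``in particular'' clause of the claim immediately preceding this corollary, applied to $\tup{q,t}$ with $\beta=\alpha$, gives that $\tup{q,t}\restriction\alpha\in\PP_\alpha$ is a strongly $M$-generic condition for $\PP_\alpha$. It extends $\tup{p,s}$ by construction, which completes this case.

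The case $\alpha=\theta$ is the only point at which the preceding claim is not directly available, since $\theta\notin M$ and the restriction operation $\tup{\cdot,\cdot}\restriction\theta$ has not been defined. However, the entire construction in the proof of the preceding claim---the $\leq$-decreasing sequence $\tup{\tup{p_i,s_i}\mid i\leq\mu\cdot\omega}$, the names $\dot\xi^\alpha_j$, and the $\kappa$-latticeness argument at limit stages---goes through verbatim with ``$\beta$'' read as ``$\theta$'' throughout and the restriction operation replaced by the identity; this shows that $\tup{q,t}$ itself is strongly $M$-generic for $\PP$. The only thing that might count as an ``obstacle'' is purely bookkeeping; no new forcing-theoretic input is needed beyond \autoref{iteration-lattice} and the preceding claim.
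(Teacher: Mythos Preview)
Your approach is correct and is exactly the derivation the paper leaves implicit (note the \qed\ with no proof): combine the claim labelled ``Corollary~6.11'' with the ``in particular'' clause of the claim immediately preceding the corollary, and for $\alpha=\theta$ observe that the inductive argument of that claim goes through unchanged.

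One small slip in the case $\alpha\in E$: the condition $\tup{q,t}\restriction\alpha=\tup{q\restriction\alpha,\,t\cap H(\alpha)}$ need \emph{not} extend $\tup{p,s}$, because the side conditions in $s$ are not required to lie in $H(\alpha)$ (clause~(1) in the definition of $\PP_\beta$ places no such restriction), so $s\subseteq t\cap H(\alpha)$ may fail. The fix is immediate: take instead $\tup{q\restriction\alpha,\,t}$. This is a condition in $\PP_\alpha$ (clause~(3) for any $\gamma\in\dom q\cap\alpha$ is inherited verbatim from $\tup{q,t}\in\PP$), it extends $\tup{p,s}$ since $s\subseteq t$ and $\dom p\subseteq\alpha$, and it lies below the strongly $M$-generic condition $\tup{q,t}\restriction\alpha$, hence is itself strongly $M$-generic for $\PP_\alpha$.
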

Finally, by standard reflection arguments using the Laver function and the fact that unboundedly often we choose $\Col(\kappa^+,\alpha)$ and $\Add(\kappa,1)$ as $F(\alpha)$, we get the following corollary.
\begin{corollary}$\PP$ forces $\theta=\kappa^{++}$ together with $\kappa$-$\Str\PFA$.\qed
\end{corollary}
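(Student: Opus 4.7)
The plan is to split the corollary into its two assertions---that $\theta=\kappa^{++}$ in the extension and that the $\kappa$-Strongly Proper Forcing Axiom holds there---and to prove each using the Laver function together with standard supercompactness reflection.

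For the cardinal arithmetic, $\kappa^+$ is preserved by \autoref{iteration-strongly-proper}: being $\kappa$-strongly proper with respect to $\kappa$-closed models of size $\kappa$ implies that any forced function $\kappa\to\kappa^+$ is bounded through $M\cap\kappa^+$ for an appropriate such $M$. Every cardinal in $(\kappa^+,\theta)$ is collapsed by arranging, as is standard in the manufacture of Laver functions, that $F(\alpha)$ is a $\PP_\alpha$-name for $\Col(\kappa^+,\alpha)$ for cofinally many $\alpha\in E$, so that $\PP$ factors through each such collapse. Preservation of $\theta$ is a $\theta$-cc argument using inaccessibility of $\theta$, the small-support nature of the conditions (the components of size ${<}\kappa$), and the strong $H(\alpha)$-genericity supplied by Claim~6.7, in the style of the PFA argument in \cite{Neeman:2014}.

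For the forcing axiom, fix a $\PP$-name $\dot\QQ$ for a $\kappa$-lattice $\kappa$-strongly proper forcing and a sequence $\tup{\dot D_\beta\mid\beta<\kappa^+}$ of names for dense open subsets of $\dot\QQ$, and let $\lambda$ be large enough that all these names lie in $H(\lambda)$. Using the Laver property of $F$, pick an elementary embedding $j\colon V\to M$ with $\crit(j)=\theta$, $j(\theta)>\lambda$, $M^\lambda\subseteq M$, and $j(F)(\theta)$ a canonical code for the pair $\bigl(\dot\QQ,\tup{\dot D_\beta\mid\beta<\kappa^+}\bigr)$. Since $F\in V_\theta$, we have $j(F)\restriction\theta=F$, hence $j(\PP)\restriction\theta=\PP$, $\theta\in j(E)$, and stage $\theta$ of $j(\PP)$ forces precisely with $\dot\QQ$. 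Given a $V$-generic $G\subseteq\PP$ and a $V[G]$-generic $H\subseteq\dot\QQ^G$, the pair $G\ast H$ is $M$-generic for $j(\PP)\restriction(\theta+1)$ by the $\lambda$-closure of $M$. The next step is to extend $G\ast H$ to a full $M$-generic $G^*\subseteq j(\PP)$ in some modest further generic extension of $V[G\ast H]$ and lift $j$ to $j^*\colon V[G]\to M[G^*]$. In $M[G^*]$ the filter $H$ meets every dense set indexed by $\beta<\kappa^+$ under the coding supplied by $j(F)(\theta)$, and by elementarity of $j^*$ the existence of such a filter reflects down to $V[G]$, yielding the axiom.

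The main obstacle is the lifting step---producing the tail generic $G^*$ over $M$ and verifying that the interpretations match so that elementarity delivers what is needed. This is the standard technical core of any PFA-from-supercompact argument and is carried out in \cite{Neeman:2014}; the adaptation to the present setting is transparent because applying $j$ to the claims of this section shows that $j(\PP)/(G\ast H)$ is $\kappa$-strongly proper and $\kappa$-lattice in $M[G\ast H]$, so the same closure and master-condition machinery that built $\PP$ supplies, from within $V[G\ast H]$, enough master conditions over the relevant tower of submodels to assemble $G^*$.
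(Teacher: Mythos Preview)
Your proposal is correct and follows essentially the same route as the paper, which disposes of the corollary in a single sentence invoking ``standard reflection arguments using the Laver function'' together with the cofinal placement of $\Col(\kappa^+,\alpha)$ (and $\Add(\kappa,1)$) among the iterands. One cosmetic point: since the iteration as defined requires $F(\alpha)$ itself to be a $\PP_\alpha$-name for a forcing, it is cleaner to set $j(F)(\theta)=\dot\QQ$ and note that the $\kappa^+$-sequence of dense sets lies in $M$ automatically by $\lambda$-closure, rather than coding the pair into $j(F)(\theta)$.
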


The following fact is not needed for the proof of \autoref{thm:supercompact}, but it will be needed for the proofs of \autoref{inacc}, \autoref{mainthm0}, and \autoref{mainthm1}.

\begin{claim}\label{cc} $\PP$ has the $\theta$-c.c.
\end{claim}

\begin{proof}
Let $\tup{p_\xi, s_\xi}\in \PP$ for $\xi<\theta$. By a standard $\Delta$-system argument we may find $I\subseteq$ of size $\theta$, together with $\bar\theta<\theta$, a function $\bar p$, and $\bar s\in [(\cS\cup\cT)\cap H(\bar\theta)]^{<\kappa}$ such that for all $\xi_0<\xi_1$ in $I$:

\begin{enumerate}

\item $\dom(p_{\xi_0})\cap\dom(p_{\xi_1})=\dom(\bar p)$ and $p_{\xi_0}\restriction \bar\theta=p_{\xi_1}\restriction\bar\theta=\bar p$,
\item $s_{\xi_0}\cap H(\bar\theta)=s_{\xi_1}\cap H(\bar\theta)=\bar s$, and
\item there is $\alpha\in E$ such that $\tup{p_{\xi_0}, s_{\xi_0}}\in H(\alpha)$, $p_{\xi_1}\restriction \alpha = \bar p$, and $s_{\xi_1}\cap H(\alpha)=\bar s$.
\end{enumerate}

Let us fix $\xi_0<\xi_1$ in $I$ and let $\alpha\in E$ as in (3) for this pair. We may assume that $s_{\xi_1}\setminus H(\alpha)\neq\varnothing$ as otherwise the proof is simpler. If the member of minimal rank in $s_{\xi_1}\setminus H(\alpha)$ is in $\cT$, then $\tup{p_{\xi_0}\cup p_{\xi_1}, s_{\xi_0}\cup s_{\xi_1}}$ is a condition in $\PP$ extending both $\tup{p_{\xi_0}, s_{\xi_0}}$ and  $\tup{p_{\xi_1}, s_{\xi_1}}$ thanks to the fact that the members of $\cT$ form an $\in$-chain.

Let us now consider the case that the member of minimal rank in $s_{\xi_1}\setminus H(\alpha)$ is a model $M$ in $\cS$. Let $\alpha^*= \min((M\cap\Ord)\setminus\alpha)$. Then $\alpha^*\in E$. To see this, we first note that $\cf(\alpha^*)\geq\kappa^+$. We then note that if $\alpha\notin M$, then $\alpha^*$ is a limit of members of $E$ as $\alpha<\alpha^*$ is above $\sup(M\cap\alpha^*)$, and therefore it is a strong limit.

It now follows that $\tup{p_{\xi_0}\cup p_{\xi_1}, s_{\xi_0}\cup \{H(\alpha^*)\}\cup s_{\xi_1}}$ is a condition in $\PP$  extending both $\tup{p_{\xi_0}, s_{\xi_0}}$ and $\tup{p_{\xi_1}, s_{\xi_1}}$ (again using the fact that the members of $\cT$ form an $\in$-chain).
\end{proof}

\section{Weak reflection of \texorpdfstring{$\kappa$}{k}-strongly proper forcings}
\begin{lemma}[Weak Reflection Lemma]\label{reflection}
Let $\kappa$ be a regular cardinal such that $\kappa^{<\kappa}=\kappa$. Suppose that $\PP$ is $\kappa$-lattice and $\kappa$-strongly proper forcing and let $\cD=\{D_\alpha\mid\alpha<\kappa^+\}$ be a family of dense open sets. Then there is a $\kappa$-lattice and $\kappa$-strongly proper forcing $\PP^*\subseteq\PP$ of size $2^\kappa$ and a family of dense subsets of $\PP^*$, $\cD^*=\{D_\alpha^*\mid\alpha<\kappa^+\}$, such that there is a $\cD$-generic filter of $\PP$  if and only if there is a $\cD^*$-generic filter of $\PP^*$.
\end{lemma}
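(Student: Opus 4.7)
The plan is to let $M\prec H(\chi)$ be a $\kappa$-closed elementary submodel of size $2^\kappa$ containing $\PP$, $\cD$, and $\kappa^+$ as a subset, and to set $\PP^*:=\PP\cap M$ with the inherited ordering, together with $D^*_\alpha:=D_\alpha\cap M$. Under $\GCH$ we have $2^\kappa=\kappa^+$, and by a standard construction $M$ may in addition be taken to be $\kappa^+$-closed, i.e.\ $M^\kappa\subseteq M$. The routine verifications are then: $|\PP^*|\le 2^\kappa$; each $D^*_\alpha$ is dense in $\PP^*$ by elementarity, since given $p\in\PP^*$ the witnesses $q\le p$ with $q\in D_\alpha$ can be found in $M$; and $\PP^*$ is $\kappa$-lattice, because any directed $S\subseteq\PP^*$ of size ${<}\kappa$ lies in $M$ by $\kappa$-closure of $M$, so its $\PP$-greatest lower bound is in $M$ by elementarity. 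The same argument shows that compatibility in $\PP^*$ coincides with compatibility in $\PP$, so the inclusion $\PP^*\hookrightarrow\PP$ is a complete embedding.

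The main obstacle is to show that $\PP^*$ is $\kappa$-strongly proper. Given $N\prec H(\chi^*)$ that is $\kappa$-closed of size $\kappa$ with $\PP^*\in N$, and $p\in\PP^*\cap N$, the $\kappa^+$-closure of $M$ yields $N\cap\PP^*\in M$. Inside $M$, by elementarity, we may form a $\kappa$-closed elementary submodel $N^*\in M$ of $H(\chi^*)$ of size $\kappa$ containing $\PP$, $p$, and $N\cap\PP^*$ (the last as a subset, via enumeration). Applying $\kappa$-strong properness of $\PP$ inside $M$ and picking the witness via a Skolem function in $M$ gives $p^*\le p$ with $p^*\in\PP\cap M=\PP^*$ that is strongly $N^*$-generic for $\PP$. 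It then remains to verify that $p^*$ is strongly $N$-generic for $\PP^*$: for any $q\le p^*$ in $\PP^*$ the reduction $\pi_{N^*}(q)\in\PP\cap N^*$ lies in $\PP^*\cap N^*$ (since $N^*\subseteq M$ by the $\kappa^+$-closure of $M$), and one must then turn this into a reduction in $\PP^*\cap N$ by exploiting the $\kappa$-lattice structure of $\PP^*$ together with its complete-embedding status in $\PP$. Pushing this transfer through is the delicate technical heart of the lemma.

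For the biconditional, the direction ``$\cD^*$-generic implies $\cD$-generic'' is immediate: if $G^*\subseteq\PP^*$ is $\cD^*$-generic, then $G:=\{p\in\PP : \exists q\in G^*,\, q\le p\}$ is a filter in $\PP$, and $G\cap D_\alpha\supseteq G^*\cap D^*_\alpha\ne\varnothing$ for each $\alpha$. For the converse I appeal to reflection: the statement ``there exists a $\cD$-generic filter for $\PP$'' is first-order over $H(\chi)$ with parameters $\PP,\cD\in M$, so by elementarity of $M$, if $V$ satisfies it then so does $M$, giving some $G'\in M$ that is $\cD$-generic for $\PP$. Then $G^*:=G'\cap\PP^*$ is a filter in $\PP^*$---downward directedness follows from the $\kappa$-lattice property applied to pairs in $G'\cap\PP^*$, whose greatest lower bounds lie in $M$ by elementarity and hence in $G'$ by upward closure---and for each $\alpha<\kappa^+$, applying a Skolem function in $M$ to the true statement ``$G'\cap D_\alpha\ne\varnothing$'' yields a witness $y_\alpha\in G'\cap D_\alpha\cap M=G^*\cap D^*_\alpha$.
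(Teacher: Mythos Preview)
Your setup is the same as the paper's---take a $\kappa^+$-closed $M\prec H(\theta)$ of size $2^\kappa$ and set $\PP^*=\PP\cap M$, $D^*_\alpha=D_\alpha\cap M$---and your treatments of the $\kappa$-lattice property and of the biconditional are fine. The gap is precisely where you flag it: the proof that $\PP^*$ is $\kappa$-strongly proper.

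Your detour through an auxiliary model $N^*\in M$ does not close. You obtain $p^*\in\PP^*$ strongly $N^*$-generic for $\PP$, so for $q\leq p^*$ in $\PP^*$ the reduction $\pi_{N^*}(q)$ lies in $\PP\cap N^*\subseteq\PP^*$. But the definition of ``strongly $N$-generic for $\PP^*$'' requires the reduction to lie in $\PP^*\cap N$, and nothing ties $N^*$ to $N$ beyond the inclusion $N\cap\PP^*\subseteq N^*$, which goes the wrong way. There is no mechanism---lattice structure or otherwise---that converts a point of $N^*$ into a point of $N$ with the required property; the ``delicate technical heart'' you allude to is not there to be found along this route. (Incidentally, your side claim that $\PP^*\hookrightarrow\PP$ is a complete embedding is not justified by mere agreement of compatibility, is not used anywhere, and need not hold.)

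The paper avoids the problem entirely by using $N\cap M$ in place of your $N^*$. By $\kappa^+$-closure of $M$ one has $N\cap M\in M$; it is $\kappa$-closed, of size $\kappa$, and (using $H(\lambda)\in M$) an elementary submodel of $H(\lambda)$. Applying $\kappa$-strong properness of $\PP$ to the model $N\cap M$, and using elementarity of $M$ to find the witness inside $M$, yields $q\in\PP^*$ strongly $(N\cap M)$-generic for $\PP$. The key identity is $\PP^*\cap N=\PP\cap M\cap N=\PP\cap(N\cap M)$: the projection $\pi_{N\cap M}$, restricted to extensions in $\PP^*$, already lands in $\PP^*\cap N$, and any $r\in\PP^*\cap N$ extending $\pi_{N\cap M}(q')$ is compatible with $q'$ in $\PP$, hence in $\PP^*$ by elementarity of $M$. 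No transfer step is needed. Replace your $N^*$ by $N\cap M$ and the argument goes through in one line.
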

\begin{proof}
Let $\theta$ be a large enough regular cardinal and let $N\prec H(\theta)$ be a $\kappa^+$-closed elementary submodel such that $\PP, \cD\in N$ and $|N|= 2^\kappa$. Let $\PP^*=\PP\cap N$ and $\cD^*=\{D_\alpha\cap N \mid\alpha<\kappa^+\}$.
\begin{claim}
$\PP^*$ is $\kappa$-lattice and $\kappa$-strongly proper.
\end{claim}
\begin{proof}
The fact that $\PP^*$ is $\kappa$-lattice follows immediately from the closedness of $N$ and elementarity. We now prove that $\PP^*$ is $\kappa$-strongly proper. Let $\lambda\in H(\theta)$ be a large enough regular cardinal, which exists if we choose $\theta$ to be sufficiently large,\footnote{By which we mean $\theta>2^{2^{\kappa}}$, and of course we may assume to have chosen $\theta$ this way.} and let $M\prec H(\lambda)$ be $\kappa$-closed, of cardinality $\kappa$, and such that $\PP\in M$.

By $\kappa^+$-closedness of $N$, we get that $M\cap N\in N$, and of course $|M\cap N|=\kappa$ and $M\cap N$ is $\kappa$-closed. Also, we may assume that $H(\lambda)\in N$, and therefore $M\cap N$ is an elementary submodel of $H(\lambda)$. Since $\lambda$ was large enough, by elementarity of $N$ it follows that whenever $p\in\PP\cap M\cap N$, there is an extension of $p$ to a strongly $M\cap N$-generic condition $q$ for $\PP$. By elementarity, we can find such a $q$ in $N$. But this implies in particular that $q$ is also strongly $M\cap N$-generic for $\PP^*$ (as witnessed by the restriction of the projection function $\pi_M$ to $\PP^*\restriction q$), which of course means that $q$ is strongly $M$-generic for $\PP^*$.
\end{proof}

It is now trivial to see that there is a $\cD^*$-generic filter for $\PP^*$ if and only if there is a $\cD$-generic filter for $\PP$.
\end{proof}

The above lemma should be compared with the well-known fact that if $\PP$ is a c.c.c.\ partial order, $\kappa\leq |\PP|$, and $\{D_\alpha\mid \alpha<\kappa\}$ is a collection of dense subsets of $\PP$, then there is a c.c.c.\ suborder $\QQ$ of $\PP$ such that $|\QQ|=\kappa$ and such that $D_\alpha\cap\QQ$ is a dense subset of $\QQ$ for every $\alpha<\kappa$. This reflection property for c.c.c.\ forcings is of course what enables one to force $\MA_\kappa$, for a given infinite cardinal $\kappa$, without any large cardinals. As we will soon see, the present weak reflection lemma is one of the two main ingredients that will allow us to force $\kappa$-$\Str\PFA$ without any use of large cardinals.

\section{Peeling off supercompactness to inaccessibility}
Given regular cardinals $\kappa<\theta$, we write $S^\theta_\kappa$ to denote $\{\alpha<\theta\mid \cf(\alpha)=\kappa\}$. We define $S^\theta_{{>}\kappa}$ similarly.

\begin{theorem}\label{inacc}
Assume $\GCH$ holds in $V$. Suppose that $\kappa$ is a regular cardinal and $\theta>\kappa$ is an inaccessible cardinal such that $\lozenge(S^\theta_{{>}\kappa})$ holds. Then there is a $\kappa$-lattice and $\kappa$-strongly proper forcing $\PP$ which forces that $\theta=\kappa^{++}=2^\kappa$ and that the $\kappa$-$\Str\PFA$ holds.
\end{theorem}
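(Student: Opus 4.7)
The plan is to reuse the construction of \autoref{thm:supercompact} wholesale, replacing the Laver function by a bookkeeping derived from the diamond sequence $\langle S_\alpha\mid\alpha\in S^\theta_{>\kappa}\rangle$, and then combining this with the Weak Reflection Lemma to verify the forcing axiom in the final extension. I would fix a canonical coding that reads nice $\PP_\alpha$-names for elements of $H(\alpha)$ off subsets of $\alpha$, and define $F(\alpha)$ to be the decoded name whenever $S_\alpha$ codes a nice $\PP_\alpha$-name for a $\kappa$-lattice, $\kappa$-strongly proper forcing whose conditions are ordinals; otherwise $F(\alpha)$ defaults to $\Col(\kappa^+,\alpha)$ or $\Add(\kappa,1)$, in a fixed pattern making both defaults cofinal in $\theta$. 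Define $E$ as in Section 3 using this $F$; inaccessibility of $\theta$ and $\lozenge(S^\theta_{>\kappa})$ give that $E$ is stationary.

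With this $F$, the definition of $\PP=\PP_\theta$ and the proofs from Section 3 that $\PP$ is $\kappa$-lattice, $\kappa$-strongly proper, of size $\theta$, satisfies the $\theta$-c.c., and forces $\theta=\kappa^{++}=2^\kappa$ carry over essentially unchanged, since those proofs used $F$ only as a bookkeeping device and exploited inaccessibility of $\theta$ rather than its supercompactness.

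For the forcing axiom in $V[G]$ (with $G$ $V$-generic for $\PP$), given $\QQ,\cD=\{D_\alpha\mid\alpha<\kappa^+\}\in V[G]$ with $\QQ$ $\kappa$-lattice and $\kappa$-strongly proper, I would apply the Weak Reflection Lemma in $V[G]$ to obtain $\QQ^*=\QQ\cap M$ and $\cD^*=\{D_\alpha\cap M\mid\alpha<\kappa^+\}$, where $M\prec H(\chi)^{V[G]}$ is $\kappa^+$-closed of size $(2^\kappa)^{V[G]}=\theta$. Writing $M=\bigcup_{\xi<\theta}M_\xi$ as an increasing continuous union of elementary submodels of size $<\theta$, arranged so that each $\dot M_\xi$ is a $\PP_{\beta_\xi}$-name for some $\beta_\xi<\theta$, presents $\QQ^*=\bigcup_\xi\QQ\cap M_\xi$ as a directed union of forcings of size $<\theta$, each living in the corresponding intermediate model $V[G\restriction\PP_{\beta_\xi}]$. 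The $V$-coded name for this directed system is a subset of $\theta$ in $V$ (by $\theta$-c.c.\ of $\PP$ and $|\PP|=\theta$), and by $\lozenge(S^\theta_{>\kappa})$ it is caught by $S_\alpha$ at stationarily many $\alpha\in E\cap S^\theta_{>\kappa}$. At each such $\alpha$ (with an appropriate $\xi(\alpha)<\alpha$), $F(\alpha)$ is a $\PP_\alpha$-name for $\QQ\cap M_{\xi(\alpha)}$; the stage-$\alpha$ generic supplies a generic for this initial segment of $\QQ^*$. The $\kappa^+$-closure of $M$ ensures every $D_\alpha\cap M$ sits in some $M_\xi$, so every $D_\alpha^*$ is met by the stage generic at a later diamond-caught stage. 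Coherently assembling these stage generics yields a $\cD^*$-generic filter for $\QQ^*$, which by the Weak Reflection Lemma furnishes the desired $\cD$-generic for $\QQ$.

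The hard part will be this final coherence step. Unlike the supercompact case, where a single Laver-caught stage directly forced with all of $\QQ^*$, here $\QQ^*$ has size $\theta$ in $V[G]$ while each stage of the iteration accommodates only forcings of size $<\theta$, so a generic for $\QQ^*$ must be distributed across stationarily many diamond-caught stages. The key technical point is to use the master-condition structure of the side-condition iteration (as developed in Section 3) to verify that stage generics at distinct $\alpha$ project consistently onto their common initial segments of $\QQ^*$, so that their union is a genuine filter meeting all $\kappa^+$ dense sets in $\cD^*$.
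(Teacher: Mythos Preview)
Your proposal correctly identifies that the Laver function can be replaced by diamond-based bookkeeping and that the Weak Reflection Lemma reduces to forcings $\QQ^*$ of size $\theta$. However, the ``coherence step'' you flag as the hard part is a genuine gap with no visible resolution: the iteration $\PP$ has no mechanism linking the stage-$\alpha_1$ generic for $F(\alpha_1)$ to the stage-$\alpha_2$ generic for $F(\alpha_2)$, even when these happen to name nested initial segments of $\QQ^*$. These are independent coordinates, and the master-condition machinery of Section~3 controls genericity of the tail over intermediate models, not coherence between generics at distinct active stages. There is also a prior unaddressed issue: for the iteration to act at a caught stage $\alpha$ at all, you need $\PP_\alpha$ to force $\QQ\cap M_{\xi(\alpha)}$ to be $\kappa$-strongly proper in $V[G_\alpha]$ (not merely in $V[G]$), and nothing in your outline explains why this should hold.

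The paper avoids both difficulties by working at a \emph{single} diamond-caught stage $\alpha$. Your size worry---that $\QQ^*$ has size $\theta$ while each stage only accommodates size ${<}\theta$---dissolves once one fixes a $\kappa^+$-closed $R\prec H(\chi)$ with $R\cap V_\theta=V_\alpha$ containing the relevant names: then $F(\alpha)=\dot\QQ^*\cap V_\alpha$ is exactly $R$'s version of $\dot\QQ^*$, and since $R$ is $\kappa^+$-closed it contains all of $\cD^*$, so the single stage-$\alpha$ generic already meets every dense set by elementarity. The real work, which your outline does not touch, is showing that $\PP_\alpha$ forces $F(\alpha)$ to be $\kappa$-strongly proper, and this is where \autoref{cohen-subsets} enters essentially: given a $\PP_\alpha$-name $\dot N$ for a $\kappa$-closed model of size $\kappa$, the trace $\dot N\cap R[\dot G_\alpha]$ is coded by a $\kappa$-sequence of ordinals, which by \autoref{cohen-subsets} lives in a $\kappa$-Cohen sub-extension and hence has a name in $R$ by $\kappa^+$-closure; elementarity of $R$ then supplies the strongly generic extension. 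No multi-stage assembly is needed.
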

\begin{proof}
We repeat the same argument as in the proof of \autoref{thm:supercompact} with $\PP=\PP_\theta$ as described in that proof. The main difference is that here we use the diamond sequence to guess the names for our partial orders. To be more precise, we fix a bijection $\varphi\colon\theta\to V_\theta$ and a diamond sequence $\langle A_\alpha\mid \alpha\in\theta, \cf(\alpha)>\kappa\rangle$ on $S^\theta_{{>}\kappa}$, and let $F\colon S^\theta_{{>}\kappa}\to V_\theta$ be the function defined by $F(\alpha)=\varphi``A_\alpha\subseteq V_\theta$ for each $\alpha$. We then proceed as before with this function $F$ in place of the Laver function. It is not difficult to see that all relevant claims from \autoref{section2} apply to the present construction.

Suppose now that $\QQ$ is a $\kappa$-lattice and $\kappa$-strongly proper forcing in $V[G]$, and $\cD$ is a sequence of length $\kappa^+$ of dense open sets. By the weak reflection lemma we can reduce $\QQ$ to $\QQ^*$ of size $\kappa^{++}=\theta=2^\kappa$. Let $\dot{\QQ}^*$ and $\dot{\cD}^*$ be $\PP$-names for $\QQ^*$. Since $\PP$ has the $\theta$-chain condition (\autoref{cc}), we may assume that both $\dot{\QQ}^*$ and $\dot{\cD}^*$ are included in $V_\theta$. By the choice of $F$, there is some large enough $\alpha$ such that $F(\alpha)=\dot\QQ^*\cap V_\alpha$, and for a large enough $\chi$ we can fix $R\prec H(\chi)$ which is $\kappa^+$-closed, $R\cap V_\theta=V_\alpha$, and such that $R$ contains all  the relevant objects. The rest of Neeman's argument will be as before, and hence the proof will be complete, provided we can show that $\PP_\alpha$ forces $F(\alpha)$ to be $\kappa$-lattice and $\kappa$-strongly proper.

The fact that $\forces_\alpha F(\alpha)$ is $\check\kappa$-lattice is straightforward, using that $R$ is $\kappa^+$-closed: Given $\mu<\kappa$ and a sequence $\sigma=\langle \dot r_\alpha\mid \alpha<\mu\rangle$ of $\PP$-names for $\dot{\QQ}^*$-conditions in $R$, $\sigma$ is in $R$, and therefore, by elementarity of $R$ and the fact that $\dot{\QQ}^*$ is forced to be $\kappa$-lattice, we may fix a $\PP$-name in $R$ for a condition which is forced to be the greatest lower bound of $\{\dot r_\alpha\mid \alpha<\mu\}$ provided this set is directed in $\dot{\QQ}^*$.

It remains to prove that $F(\alpha)$ is also forced to be $\kappa$-strongly proper. For this, let $\dot N$ be a $\PP_\alpha$-name for a $\kappa$-closed elementary submodel of some large enough $H(\lambda)$ such that $\forces_\alpha F(\alpha)\in \dot N$ and $|\dot N|=\check\kappa$. We may assume for simplicity that $\lambda\in R$. Let $\dot N'$ be a $\PP_\alpha$-name for $\dot N\cap R[\dot G_\alpha]$, and let $\dot r$ be a $\PP_\alpha$-name for a condition in $F(\alpha)\cap\dot N'$. It suffices to show that there is a name $\dot r^*$, of an extension of $\dot r$ in $F(\alpha)$, forced to be a strongly $\dot N'$-generic condition for $F(\alpha)$.

The key point\footnote{This may be a key point in the present proof, but it is not needed in general for this type of constructions (see \autoref{rem0}).} is that $\dot N'$ may be identified with a $\PP_\alpha$-name $\dot N^\dag$ for a $\kappa$-sequence of ordinals,\footnote{Working in the $\PP_\alpha$-extension of $V$, we may fix an ordinal $\lambda_0$ for which there is a bijection $\varphi\colon H(\lambda)\to\lambda_0$. But then we may identify  $\dot N'$ with an enumeration in length $\kappa$ of $\varphi``\dot N'$.} and since $\PP_\alpha$ is $\kappa$-lattice (by \autoref{iteration-lattice} and the fact $\cf(\alpha)>\kappa$) and $\kappa$-strongly proper (by \autoref{iteration-strongly-proper}), this means that $\dot N^\dag$ may be taken as a $\bar\PP$-name in a complete suborder $\bar\PP$ of $\PP_\alpha$ isomorphic to $\kappa^{<\kappa}$. But by $\bar\PP\subseteq R$ and the $\kappa^+$-closedness of $R$, this means that $\dot N^\dag\in R$ and therefore also $\dot N'\in R$, and since $R\cap V_\theta=V_\alpha$, $R$ thinks that $\dot N'$ is a $\PP$-name for a relevant model. Since $\dot\QQ^*$ is a $\PP$-name of a $\kappa$-strongly proper forcing, the same holds in $R$, and therefore $\dot r$ can be extended to a condition $\dot r^*$ as wanted.
\end{proof}

\section{Reducing the consistency strength to \texorpdfstring{$\ZFC$}{ZFC}}\label{section6}
The next step is to remove the inaccessible cardinal from our hypotheses, thereby arriving at our first main result.
\begin{theorem}\label{mainthm0}
Assume $\GCH$, and let $\kappa<\kappa^+<\theta$ be regular cardinals. Then there is a $\kappa$-lattice and $\kappa$-strongly proper forcing $\PP$ which forces $2^\kappa=\kappa^{++}=\theta$ together with $\kappa$-$\Str\PFA$.
\end{theorem}
Since we can start by forcing with $\Col(\kappa^+, {<}\theta)$, we may as well assume that $\theta=\kappa^{++}$, and that no cardinals are collapsed. More importantly, after this preliminary forcing we may fix a $\lozenge(S^\theta_{\kappa^+})$-sequence $\vec A$.

The proof of the theorem is the same as in the inaccessible case, but we need to find a substitute for the models $H(\alpha)$ ($=V_\alpha$) from the filtration $\langle V_\alpha\mid  \alpha\in E\rangle$ used in the side conditions. For this we simply take a filtration $\vec N=\langle N_\alpha\mid \alpha<\theta\rangle$ of $H(\theta)$ into transitive models such that $N_\alpha$ is $\kappa$-closed for every $\alpha$ of cofinality $\kappa^+$, which we can do thanks to $2^\kappa=\kappa^+$ and $2^{\kappa^+}=\theta$. We then let $E=S^\theta_{\kappa^+}$. We also require that the models $M$  in $\cS$ be such that $M\prec\tup{H(\theta),\in, \vec N, \vec A}$. This way we guarantee that the proof of \autoref{cc} goes through in the present situation.

\section{Getting \texorpdfstring{$\kappa$-$\Str\PFA$}{k-StrPFA} together with \texorpdfstring{$2^\kappa$}{P(k)} large}\label{section7}

In this section we generalise \autoref{mainthm0} by proving that $\kappa$-$\Str\PFA$ is consistent with arbitrarily large values of $2^\kappa$. The theorem is the following.

\begin{theorem}\label{mainthm1}
Assume $\GCH$, and let $\kappa<\kappa^+<\theta$ be regular cardinals. Then there is a $\kappa$-lattice and $\kappa$-strongly proper forcing $\PP$ which forces $2^\kappa=\theta$ together with $\kappa$-$\Str\PFA$.
\end{theorem}

\begin{remark}
$\kappa$-$\Str\PFA$ is the first forcing axiom we know of the form $\FA_{\kappa^+}(\Gamma)$, here $\Gamma=\{\PP\mid\PP\text{ is }\kappa\text{-lattice and }\kappa\text{-strongly proper}\}$, such that $\FA_{\kappa^+}(\Gamma)$ is consistent with $2^\kappa$ arbitrarily large whereas $\FA_{\kappa^{++}}(\Gamma)$ is false. To see that $\FA_{\kappa^{++}}(\Gamma)$ is false it suffices to consider the poset $\PP$ of ${<}\kappa$-sized $\in$-chains of $\kappa$-closed elementary submodels $N\prec H(\kappa^{++})$ such that $|N|=\kappa$. $\PP$ is $\kappa$-lattice and $\kappa$-strongly proper, and an application of $\FA_{\kappa^{++}}(\{\PP\})$ would cover $\kappa^{++}$ by a $\kappa^+$-chain of $\kappa$-sized sets.
\end{remark}

In order to prove \autoref{mainthm1} it will be convenient to actually prove a slightly stronger result. Given a cardinal $\kappa$, a ground model $V_0$, a forcing $\PP\in V_0$, and a $V_0$-generic filter $H\subseteq\PP$ such that $V=V_0[H]$, let us call a forcing notion $\QQ\in V$ \emph{$\kappa$-$V_0$-$H$-strongly proper} in the case that for every large enough cardinal $\theta$ and every $M\in H(\theta)^{V_0}$, if $M\prec H(\theta)^{V_0}$, $|M|^{V_0}=\kappa$, $M$ is $\kappa$-closed in $V_0$, $\PP\in M$, and $\QQ\in M[H]$, then given any $q\in \QQ\in M[H]$ there is an extension of $q$ in $\QQ$ which is strongly $M[H]$-generic for $\QQ$.

We note that this is a more general notion than that of $\kappa$-strong properness, so that the $\FA_{\kappa^+}$ for the class of $\kappa$-lattice posets with this property implies $\kappa$-$\Str\PFA$.

Throughout the following proof of \autoref{mainthm1}, given a forcing notion $\QQ$, $\dot G_\QQ$ will denote the canonical $\QQ$-name for the generic object.
\begin{proof}
We start out by letting $\theta_0=\kappa^{++}$ and fixing, after forcing with $\Col(\kappa^+, {<}\theta_0)$ if necessary, a $\lozenge(S^{\theta_0}_{\kappa^+})$-sequence $\vec A = \tup{A_\alpha\mid\alpha \in S^{\theta_0}_{\kappa^+}}$. Let us call this universe $V$. Our goal will be to build a $\kappa$-lattice and $\kappa$-strongly proper poset $\PP$ forcing $2^\kappa=\theta$ together with $\FA_{\kappa^+}(\Gamma^\kappa_{G_\PP})$, where $\Gamma^\kappa_{G_\PP}$ denotes the class of $\kappa$-lattice forcing notions which are $\kappa$-$V$-$\dot G_\PP$-strongly proper. By the above observation, $\PP$ will then force $\kappa$-$\Str\PFA$.

As in the proof of \autoref{mainthm0}, we fix a filtration $\vec N=\tup{N_\alpha\mid \alpha<\theta_0}$ of $H(\theta_0)$ into transitive models such that $N_\alpha$ is $\kappa$-closed for each $\alpha$ with $\cf(\alpha)=\kappa^+$ and let $E=S^{\theta_0}_{\kappa^+}$.
We consider a sequence $\tup{\PP_\beta\mid\beta\in E\cup\{\theta_0\}}$, built very much as in the construction in \autoref{section6}, except that at each stage $\alpha$ we look at whether $A_\alpha$ codes, not a $\PP_\alpha$-name for a relevant forcing, but a $\PP_\alpha\times \Add(\kappa, \kappa^+)$-name for a forcing which is $\kappa$-lattice and $\kappa$-$V$-$\dot G_{\PP_\alpha\times \Add(\kappa, \kappa^+)}$-strongly proper (and if so, then working parts at $\alpha$ are conditions in this forcing).

Our forcing $\PP$ witnessing \autoref{mainthm1} will now be $\PP_{\theta_0}\times \Add(\kappa, \theta)$. It is clear that $\PP$ forces $2^\kappa=\theta$ and has the $\kappa^{++}$-c.c. The proof that $\PP$ forces $\FA_{\kappa^+}(\Gamma^\kappa_{G_\PP})$ is along the lines of the corresponding proof for \autoref{mainthm0}. Specifically, suppose $\dot\QQ^*$ is a $\PP$-name for a $\kappa$-lattice $\kappa$-$V$-$\dot G_{\PP}$-strongly proper forcing, and for $\alpha<\kappa^+$, $\dot D_\alpha$ is a $\PP$-name for a dense subset of $\dot\QQ^*$.

\begin{claim}
There is a $\PP$-name, $\dot\QQ$, for a $\kappa$-lattice $\kappa$-$V$-$\dot G_{\PP}$-strongly proper suborder of $\dot\QQ^*$ of size $\kappa^{++}$ such that $\dot D_\alpha\cap \dot\QQ$ is dense in $\dot\QQ$ for each $\alpha$.
\end{claim}

\begin{proof}
  For this, let $G_0$ be $V$-generic for $\PP_{\theta_0}$ and let us work in $W=V[G_0]$. Let $\dot R$ be an $\Add(\kappa, \theta)$-name for $\dot\QQ^*$. As in the proof of \autoref{reflection}, let $N\in V$ be a $\kappa^+$-closed (in $V$) elementary submodel of size $2^\kappa=\kappa^{++}$ containing everything relevant (including $\Add(\kappa, \theta)$-names for $\dot D_\alpha$ for each $\alpha<\kappa^+$). In particular, $\PP_{\theta_0}\subseteq N$ and $N[G_0]$ is therefore $\kappa^+$-closed in $W$. Let $G$ be $W$-generic for $\Add(\kappa,\theta)$. We claim that it will suffice to take a name, $\dot\QQ$, for $\QQ=\dot R^G\cap N[G_0][G]$. We also let $H$ be a $V$-generic filter for $\PP$ such that $W[G]=V[H]$, and we will use $\QQ^*$ to denote $(\dot\QQ^*)^H=\dot R^G$.

Using the $\kappa$-closedness of $\Add(\kappa, \theta)$ and the $\kappa^+$-closedness of $N[G_0]$ in $W$ it is easy to see that $\QQ$ is $\kappa$-lattice. To see that it is $\kappa$-$V$-$H$-strongly proper, let $M\in V$, $M\prec H(\lambda)^V$, for large enough $\lambda\in N$, be $\kappa$-closed and of size $\kappa$ in $V$, and such that $\QQ\in M[H]$. Given $q\in\QQ\cap M[H]$, we need to produce a strongly $M$-generic condition for $\QQ$ extending $q$.
As in the proof of \autoref{reflection}, we use the closedness of $N$ in $W$ under $\kappa$-sequences and get that $M\cap N\in N$ is, in $V$, $\kappa$-closed and of size $\kappa$. We then finish as in that proof, noting that any strongly $(M\cap N)[G_0][G]$-generic condition for $\QQ^*$ in $N[G_0][G]$ is a strongly $M[H]$-condition for $\QQ$.
\end{proof}

By $\kappa^{++}$-c.c.\ of $\PP$, we may identify $\dot\QQ$ with a $\PP_{\theta_0}\times \Add(\kappa, \kappa^{++})$-name, which we may code by a subset of $\kappa^{++}$. Now we use our diamond $\vec A$
 to capture $\dot\QQ$ as in the proof of \autoref{mainthm0}.\footnote{When the capturing happens at a stage $\alpha\in E$, we have that $A_\alpha$ codes a $\PP_\alpha\times \Add(\kappa, \alpha)$-name, which we can of course identify with a $\PP_\alpha\times \Add(\kappa, \kappa^+)$-name.}
\end{proof}

\begin{remark}\label{rem0}
\autoref{cohen-subsets}, i.e., the fact that $\kappa$-sequences of ordinals in generic extensions by $\kappa$-lattice $\kappa$-strongly proper forcings belong to $\kappa$-Cohen extensions, is not needed in the proof of \autoref{mainthm1}. This is thanks to the fact that at a stage $\alpha\in E$ in the construction, the models $M$ for which we need to prove strong properness of the relevant forcing come in fact from $V$.
\end{remark}

It is worth pointing out---and follows from a well-known result of Paul Larson---that if $\MM^{++}$ holds and we let $\kappa=\omega_2$, then the construction in \autoref{mainthm0} (and \autoref{mainthm1}) preserves $\MM^{++}$ and so forces $\omega_2$-$\Str\PFA$ ``on top'' of this forcing axiom. And the same thing is of course also true for natural weaker forcing axioms like $\MM$, $\PFA$, and so on.

\section{Relaxing strongness or g.l.b.'s?}\label{relax}
Let $\Gamma_\kappa$ be the class of $\kappa$-lattice $\kappa$-strongly proper posets. As we have seen, while $\kappa$-$\Str\PFA$, i.e.\ $\FA_{\kappa^+}(\Gamma_\kappa)$, is consistent with $\ZFC$, it is too weak to decide the size of $2^\kappa$. In fact, this forcing axiom does not seem to have many applications. It does imply certain weak failures of Club Guessing at $\kappa^+$ (as pointed out in the introduction), as well as  $\mathfrak d(\kappa)>\kappa^+$ and the covering number of natural meagre ideals being greater than $\kappa^+$, but we do not know of any other quotable consequences. In this final section we address the prospect of (mildly) relaxing some of the constraints in the definition of $\Gamma_\kappa$ so as to obtain more powerful forcing axioms.

For the rest of this section, let us fix a regular cardinal $\kappa\geq\omega$ such that $\kappa^{<\kappa}=\kappa$.

Given a model $M$ and a set $X\in M$, let us call $S\subseteq [X]^{\kappa}$ an \emph{$M$-stationary subset of $[X]^{\kappa}$} if for every function $F\colon[X]^{<\omega}\to X$ with $F\in M$ there is some $N\in S\cap M$ such that $F``[N]^{<\omega}\subseteq N$.
This is the natural extension in the $[X]^\kappa$ context of the notion, due to Moore, of $M$-stationarity for collections of countable sets (see \cite{Moore}). Also, let us define the \emph{$\kappa$-Ellentuck topology on $[X]^\kappa$} by declaring basic open sets to be of the form $[s, Y] = \{Z\in [Y]^\kappa\mid s\subseteq Z\}$ for $Y\in [X]^\kappa$ and $s\in [Y]^{<\kappa}$.
We will next generalise Moore's Mapping Reflection Principle ($\MRP$) to the present context.

\begin{definition} $\kappa$-$\MRP$ is the following statement: Let $X$ be a set, let $\theta$ be a cardinal such that $X\in H(\theta)$, and let $\Sigma$ be a function defined on a club of $[H(\theta)]^\kappa$ and such that for every $M\in \dom(\Sigma)$, $\Sigma(M)$ is both an $M$-stationary subset of $[X]^\kappa$ and an open subset of $[X]^\kappa$ in the $\kappa$-Ellentuck topology. Then there is a $\subseteq$-continuous $\in$-chain $\tup{M_\alpha\mid \alpha<\kappa^+}$ of elementary submodels of $H(\theta)$ of size $\kappa$ such that for each $\alpha <\kappa^+$ of cofinality $\kappa$, $M_\alpha$ is $\kappa$-closed and there is some $\bar\alpha<\alpha$ such that $M_\beta\cap X\in \Sigma(M_\alpha)$ for all $\beta\in [\bar\alpha,\alpha)$.
\end{definition}

Thus, $\MRP$ is just $\omega$-$\MRP$. The following fact can be proved by a generalisation of the argument showing that $\MRP$ implies the existence of a well-order of $\power(\omega_1)$ of length $\omega_2$ $\Sigma_1$-definable over $H(\omega_2)$ from any given ladder system on $\omega_1$ and any given $\omega_1$-sequence of pairwise disjoint stationary subsets of $\omega_1$.

\begin{fact}\label{w-order} $\kappa$-$\MRP$ implies that $2^{\kappa^+}=\kappa^{++}$. In fact, a stronger statement is true. Given a club-sequence $\vec C=\tup{C_\alpha\mid\alpha\in S^{\kappa^+}_\kappa}$ and a sequence $\vec S=\tup{S_\xi\mid \xi<\kappa^+}$ of pairwise disjoint stationary subsets of $S^{\kappa^+}_\kappa$, $\kappa$-$\MRP$ implies that there is a well-order of $\power(\kappa^+)$ of length $\kappa^{++}$ which is $\Sigma_1$-definable over $H(\kappa^{++})$ from $\vec C$ and $\vec S$ as parameters.
\end{fact}

\begin{definition}A forcing $\PP$ is \emph{$\kappa$-$\MRP$-strongly proper} if for every large enough $\theta$, every $\kappa$-closed $M\prec H(\theta)$ of size $\kappa$ such that $\PP\in M$, and every $p\in\PP\cap M$ there is $q\leq p$ such that for every $q'\leq_{\PP} q$, \[\mathcal X_{q'}=\{X\in [\PP\cap M]^\kappa\mid\exists \pi_X(q')\in\PP\cap X \forall r\leq_\PP \pi_X(q'), r\in X\to r\compatible q'\}\] is an $M$-stationary subset of $[\PP]^\kappa$.\footnote{The notation $r\compatible q'$ means that $r$ is compatible with $q'$ in $\PP$, that is, they have a joint extension.}
\end{definition}

There is a natural forcing which, for a given open and stationary mapping $\Sigma$ as in the original statement of $\MRP$, adds by finite approximations a reflecting sequence for $\Sigma$.  An immediate generalisation of the proof that $\PFA$ implies $\MRP$ using such forcings yields the following.\footnote{The relevant forcing this time is a natural one for adding a suitable reflecting sequence by ${<}\kappa$-sized approximations.}

 \begin{fact}
$\FA_{\kappa^+}(\{\PP\mid\PP\text{ is }\kappa\text{-lattice and }\kappa\text{-}\MRP\text{-strongly proper}\})$ implies $\kappa$-$\MRP$.
\end{fact}

The bad news is that, as \autoref{incons} shows, this forcing axiom is inconsistent when $\kappa$ is uncountable.

 \begin{theorem}\label{incons} Suppose $\kappa\geq\omega_1$ is such that $\kappa^{<\kappa}=\kappa$. Then \[\FA_{\kappa^+}(\{\PP\mid\PP\text{ is }\kappa\text{-lattice and }\kappa\text{-}\MRP\text{-strongly proper}\})\] is false.
  \end{theorem}

When $\kappa$ is a successor cardinal, one can prove this inconsistency using the following theorem of Shelah (see \cite{Shelah:PIF}, Appendix Chap.\ 3).

\begin{theorem}[Shelah]\label{unif} Let $\kappa\geq\omega_1$ be a regular cardinal and let $\langle C_\alpha\mid\alpha\in S^{\kappa^+}_\kappa\rangle$ be a club-sequence with $\ot(C_\alpha)=\kappa$ for all $\alpha\in S^{\kappa^+}_\kappa$. Then there is a sequence $\langle f_\alpha\mid\alpha\in S^{\kappa^+}_\kappa\rangle$ of colourings, with $f_\alpha\colon C_\alpha\rightarrow\{0, 1\}$ for all $\alpha$, for which there is no function $G\colon \kappa^+\rightarrow 2$ such that for all $\alpha\in S^{\kappa^+}_\kappa$, $G(\xi)=f_\alpha(\xi)$ for club-many $\xi\in C_\alpha$.
  \end{theorem}

 The strategy in this case is to consider $\langle C_\alpha\mid\alpha\in S^{\kappa^+}_\kappa\rangle$ and $\langle f_\alpha\mid\alpha\in S^{\kappa^+}_\kappa\rangle$ as in \autoref{unif} and to apply the forcing axiom to a natural forcing for adding by ${<}\kappa$-sized approximations a regressive function $p$ on $S^{\kappa^+}_\kappa$ such that

 \begin{enumerate}
 \item for all $\alpha\in \dom(p)$, $p(\alpha)<\alpha$, and

\item for all $\alpha_0<\alpha_1$, if $\xi\in (C_{\alpha_0}\setminus p(\alpha_0))\cap (C_{\alpha_1}\setminus p(\alpha_1))$, then $f_{\alpha_0}(\xi)=f_{\alpha_1}(\xi)$.

\end{enumerate}

We then have that  $\langle f_\alpha\mid\alpha\in S^{\kappa^+}_\kappa\rangle$ can be uniformised, in fact modulo co-bounded sets, which is a contradiction.

\begin{remark}
 $\PP$ is also $\kappa^+$-c.c., so this shows the failure of \[\FA_{\kappa^+}(\{\PP\mid\PP\text{ is }\kappa\text{-lattice, }\kappa^+\text{-c.c., and }\kappa\text{-}\MRP\text{-strongly proper}\})\] when $\kappa$ is a successor cardinal such that $\kappa^{<\kappa}=\kappa$.
 \end{remark}

On the other hand, \autoref{unif} does not seem to be available when $\kappa$ is inaccessible. We will now give a proof of \autoref{incons} covering all cases.  This proof uses the following result of Shelah.

\begin{theorem}[Shelah, Claim 3.3 in \cite{Shelah-colouring}]\label{cg} For every uncountable regular cardinal $\kappa$ there is a club-sequence $\tup{C_\alpha\mid \alpha\in S^{\kappa^+}_\kappa}$ such that for every club $D$ of $\kappa^+$ there is some $\alpha\in D$ with $\{\zeta<\kappa \mid C_\alpha(\zeta+1)\in D\}$ stationary (where $\tup{C_\alpha(\zeta)\mid \zeta < \kappa}$ is the strictly increasing enumeration of $C_\alpha$).
 \end{theorem}

 We are now ready to give the proof of \autoref{incons} in the general case.

 \begin{proof}
 Given $\vec C=\langle C_\alpha\mid\alpha\in S^{\kappa^+}_\kappa\rangle$ as in \autoref{cg}, let $\PP$ be the following forcing: Conditions in $\PP$ are pairs $\tup{\mathcal I, b}$ such that:
\begin{enumerate}
\item $\mathcal I$ is a collection of ${<}\kappa$-many pairwise disjoint intervals of the form $[\alpha, \beta]$ with $\alpha\leq\beta<\kappa^+$,
\item $b$ is a regressive function with $\dom(b) \subseteq \{\min(I)\mid I\in\mathcal I\}\cap S^{\kappa^+}_\kappa$,
\item for each $\alpha\in \dom(b)$, $\{\min(I)\mid I\in\mathcal I\}\cap \{C_\alpha(\zeta+1)\mid \zeta<\kappa\}\cap (b(\alpha), \alpha) =\varnothing$, and
\item for each $\alpha\in \dom(b)$ and each $I\in\mathcal I$, if $b(\alpha)<\min(I)<\alpha$ and $I'\in\mathcal I$ is such that $\min(I) < \min(I') < \alpha$, then $\min(I) < C_\alpha(\zeta) < \min(I')$ for some $\zeta$.
 \end{enumerate}

$\tup{\mathcal I_1, b_1} \leq_{\PP} \tup{\mathcal I_0, b_0}$ if

\begin{enumerate}
\item  for every $I\in\mathcal I_0$ there is $I'\in\mathcal I_1$ with $\min(I')=\min(I)$ and $\max(I)\leq\max(I')$ and
\item $b_0\subseteq b_1$.
\end{enumerate}

Then $\PP$ belongs to the relevant class and adds a club of $\kappa^+$ violating the club-guessing property of $\vec C$.
\end{proof}

 \begin{question}
Suppose $\kappa$ is an inaccessible cardinal. Does it necessarily follow that $\FA_{\kappa^+}(\{\PP\mid\PP\text{ is }\kappa\text{-lattice, }\kappa^+\text{-c.c., and }\kappa\text{-}\MRP\text{-strongly proper}\})$ fails?
\end{question}

 Next we will show that the restriction to $\kappa$-strongly proper forcing which, in addition, are $\kappa$-lattice is not just a technical artefact of our consistency proofs but is in fact a necessary restriction. This result is essentially due to Shelah (see Appendix~Chap.~3,~3.4 in \cite{Shelah:PIF}). We include the proof for the reader's convenience.

 \begin{theorem}[Shelah] Let $\kappa$ be a successor cardinal. Then \[\FA_{\kappa^+}(\{\PP\mid\PP\text{ is }\kappa\text{-directed closed, }\kappa^+\text{-c.c., and }\kappa\text{-strongly proper}\})\] is false. \end{theorem}

 \begin{proof}
This is similar to the proof on \autoref{incons} for the case when $\kappa$ is a successor cardinal. Let $\langle C_\alpha\mid\alpha\in S^{\kappa^+}_\kappa \rangle$ be a club-sequence with $\ot(C_\alpha)=\kappa$ for all $\alpha\in S^{\kappa^+}_\kappa$ and let $\tup{f_\alpha\mid\alpha\in S^{\kappa^+}_\kappa}$  be a sequence of colourings, where $f_\alpha\colon C_\alpha\rightarrow\{0, 1\}$ for all $\alpha$, which cannot be club-uniformised in the sense of \autoref{unif}. As in the proof of \autoref{incons} in the successor cardinal case, we will produce a forcing notion $\PP$ adding a uniformizing function and belonging to the relevant class. An application of the forcing axiom to $\PP$ yields then a contradiction.

 Conditions in $\PP$ are pairs $p=\tup{a_p, \vec d_p}$, where
 \begin{enumerate}
 \item $a_p\in [S^{\kappa^+}_\kappa]^{<\kappa}$;
 \item $\vec d_p=\tup{d_p^\alpha\mid\alpha\in a_p}$ is such that, for some successor ordinal $i_p+1<\kappa$, $d_p^\alpha\colon i_p+1\rightarrow C_\alpha$ is a strictly increasing and continuous function;
 \item for all $\alpha_0$, $\alpha_1\in a_p$ and for all $\xi\in \range(d_p^{\alpha_0})\cap\range(d_p^{\alpha_1})$, $f_{\alpha_0}(\xi)=f_{\alpha_1}(\xi)$.
 \end{enumerate}

 The extension relation $\leq$ on $\PP$ is defined by letting $q\leq p$ exactly when
 \begin{enumerate}
 \item $a_p\subseteq a_q$,
 \item for every $\alpha\in a_p$, $d_p^\alpha$ is an initial segment of $d_q^\alpha$, and
 \item if $i_p<i_q$, then $(C_{\alpha_0}\cap C_{\alpha_1})\setminus \min\{d_q^{\alpha_0}(i_q), d_q^{\alpha_1}(i_q)\}=\varnothing$ for all $\alpha_0\neq \alpha_1\in a_p$.
 \end{enumerate}

 Using $\kappa^{<\kappa}=\kappa$ it is straightforward to verify that $\PP$ $\kappa$-directed closed, $\kappa^+$-c.c., and $\kappa$-strongly proper. Hence, an application of the forcing axiom to $\PP$ yields a club-uniformising function for $\tup{f_\alpha\mid\alpha\in S^{\kappa^+}_\kappa}$, which is a contradiction. On the other hand, it is not difficult to see that $\PP$ is not $\kappa$-lattice; in fact, one can easily find compatible conditions $p_0$, $p_1\in\PP$ which do not have a greatest lower bound.
 \end{proof}

The following is now a natural question.

 \begin{question} Suppose $\kappa$ is an inaccessible cardinal. Does it necessarily follow that $\FA_{\kappa^+}(\{\PP\mid\PP\text{ is }\kappa\text{-directed closed and }\kappa\text{-strongly proper}\})$ fails?
\end{question}

 We will finish with the following question.

 \begin{question}
 Is it consistent that there is any uncountable regular $\kappa$ for which $\kappa$-$\MRP$ holds? More generally, and in view of \autoref{w-order}, is there any $\Pi_2$ sentence $\sigma$ with the following properties?

 \begin{enumerate}
 \item $\ZFC$ proves that if $\kappa\geq\omega_1$ is a regular cardinal and $H(\kappa^{+})\models\sigma$ holds, then $2^{\kappa}=\kappa^+$.
 \item Some reasonable extension of $\ZFC$ proves that one can force the existence of a regular cardinal $\kappa\geq\omega_1$ such that $H(\kappa^+)\models\sigma$.
 \end{enumerate}
 \end{question}

\bibliographystyle{amsplain}

\begin{thebibliography}{1}

\bibitem{CDN:arXiv}
James Cummings, Mirna D\v{z}amonja, and Itay Neeman, \emph{Iteration of
  strongly $\kappa^+$-cc forcing posets}, arXiv \textbf{1811.05426} (2018).

\bibitem{Mitchell:2006}
William~J. Mitchell, \emph{On the {H}amkins approximation property}, Ann. Pure
  Appl. Logic \textbf{144} (2006), no.~1-3, 126--129. \MR{2279659}

\bibitem{Moore}
Justin Moore, \emph{Set mapping reflection}, Journal of Mahematical Logic
  \textbf{5} (2005), no.~1, 87--97.

\bibitem{Neeman:2014}
Itay Neeman, \emph{Forcing with sequences of models of two types}, Notre Dame
  J. Form. Log. \textbf{55} (2014), no.~2, 265--298. \MR{3201836}

\bibitem{Shelah-colouring}
Saharon Shelah, \emph{Colouring and non-productivity of $\aleph_2$-c.c.},
  Annals of Puree and Applied Logic \textbf{84} (1997), 153--174.

\bibitem{Shelah:PIF}
Saharon Shelah, \emph{Proper and improper forcing}, second ed., Perspectives in
  Mathematical Logic, Springer-Verlag, Berlin, 1998. \MR{1623206}

\bibitem{Shelah:2003}
\bysame, \emph{Not collapsing cardinals {$\leq\kappa$} in {$(<\kappa)$}-support
  iterations}, Israel J. Math. \textbf{136} (2003), 29--115. \MR{1998104}

\end{thebibliography}
\providecommand{\bysame}{\leavevmode\hbox to3em{\hrulefill}\thinspace}
\providecommand{\MR}{\relax\ifhmode\unskip\space\fi MR }
\providecommand{\MRhref}[2]{%
  \href{http://www.ams.org/mathscinet-getitem?mr=#1}{#2}
}
\providecommand{\href}[2]{#2}

\end{document}